\documentclass[11pt]{article}
\topmargin -1.0cm \textwidth 14cm \textheight 23cm \hoffset-1cm

\usepackage{amsmath,mathrsfs}
\usepackage{amssymb}
\usepackage{amsthm}
\usepackage{indentfirst}
\usepackage{color}
\allowdisplaybreaks

\newtheorem{theorem}{Theorem}[section]
\newtheorem*{theorem*}{Theorem}
\newtheorem{assumption}{Assumption}
\newtheorem{corollary}[theorem]{Corollary}
\newtheorem{remark}[theorem]{Remark}
\newtheorem{lemma}[theorem]{Lemma}
\newtheorem{definition}[theorem]{Definition}

\newtheorem{proposition}[theorem]{Proposition}
\newtheorem{example}[theorem]{Example}
\newtheorem*{proposition*}{Proposition}

\newcommand{\R}{\mathbb{R}}

\newcommand{\be}{\begin{eqnarray*}}
\newcommand{\ee}{\end{eqnarray*}}
\newcommand{\ba}{\begin{align*}}
\newcommand{\bpm}{\begin{pmatrix}}
\newcommand{\epm}{\end{pmatrix}}

\begin{document}

\title{Monogenic functions over real alternative $\ast$-algebras: \\ the several hypercomplex variables   case}
\author
{Zhenghua Xu$^1$\thanks{This work was partially supported by  Anhui Provincial Natural Science Foundation (No. 2308085MA04), Fundamental Research Funds for the Central Universities (No. JZ2025HGTG0250) and China Scholarship Council (No. 202506690052).}
,\ Chao Ding$^2$\thanks{This work was partially supported by the National Natural Science Foundation of China (No.  12271001).}
,\ and Haiyan Wang$^3$
\\
\\
\emph{$^1$\small School of Mathematics, Hefei University of Technology,}\\
\emph{\small  Hefei, 230601, P. R. China}\\
\emph{\small zhxu@hfut.edu.cn}
\\
\emph{\small $^2$Center for Pure Mathematics, School of Mathematical Sciences, Anhui University, }\\
\emph{\small Hefei 230601, P. R. China} \\
\emph{\small E-mail address:   cding@ahu.edu.cn}
\\
\emph{\small $^3$School of Science,
 Tianjin University of Technology and Education, }\\
\emph{\small Tianjin 300222, P. R. China} \\
\emph{\small E-mail address:    whaiyan@mail.ustc.edu.cn}
 }


\maketitle

\begin{abstract}
The notion of monogenic (or regular) functions, which is a correspondence of holomorphic functions, has been studied extensively in hypercomplex analysis, including  quaternionic, octonionic, and Clifford analysis.  Recently, the concept of  monogenic  functions over real alternative $\ast$-algebras has been introduced to unify several   classical monogenic functions theories. In this paper,  we initiate the study of monogenic functions of  several hypercomplex variables over real alternative $\ast$-algebras, which naturally extends the theory of several complex variables to a very general setting. In this new setting,    we develop some  fundamental properties, such as
Bochner-Martinelli formula,  Plemelj-Sokhotski formula, and Hartogs extension theorem.

 \end{abstract}
{\bf Keywords:}\quad Functions  of  several  hypercomplex variables;  alternative algebras;\\ Bochner-Martinelli formula;  Plemelj-Sokhotski formula; Hartogs theorem. \\
{\bf MSC (2020):}\quad  Primary: 30G35;  Secondary:  17D05, 32A26,  47G10.
\section{Introduction}
This paper shall  investigate  the notion of  monogenic functions of  several hypercomplex variables over real alternative $\ast$-algebras, which   extends   holomorphic functions theory in several complex variables to  the setting of non-commutative and  non-associative algebras.

For holomorphic functions of several complex variables, the  integral representation formula where the integration is carried out over the
whole boundary of the domain   was obtained firstly by Enzo Martinelli in 1938   \cite{Martinelli-38} and Salomon Bochner in 1943 \cite{Bochner-43},   independently and by different methods. Now there are a large number of literatures devoted to the Bochner-Martinelli type theorems and  applications. Without claiming completeness, we refer the reader to    \cite{Kytmanov-95,Range-86,RSS-02} and references therein for more details of the Bochner-Martinelli  theorems for (holomorphic) functions in several complex variables.
In 1957,   Lu   and Zhong \cite{Lu-Zhong-57} initiated  the study of the boundary behaviour of the Bochner-Martinelli  integral  on bounded domains with $C^{2}$  smooth boundary in  several complex variables. Thereafter, investigations on the  singular integral, such as Plemelj-Sokhotski
formula, and  higher order versions related to the Bochner-Martinelli  integral become more active; see e.g.  \cite{Lin-88,Lin-Qiu-88,Lin-Qiu-04,Qian-Zh-01,Range-86}.

As a generalization of  holomorphic functions theory of one complex variable, in the 1930s  the theory of \textit{regular functions}  was initially introduced in quaternionic analysis and  later extended to   \textit{monogenic functions} in Clifford analysis. From 1980s to now, monogenic (or regular) functions had been studied comprehensively, cf. \cite{Brackx,Colombo-Sabadini-Sommen-Struppa-04,Delanghe-Sommen-Soucek-92,Gurlebeck}.
The Bochner-Martinelli theorem had also been    discussed for some variants of monogenic functions, such as   in  two  higher dimensional variables  over real Clifford algebras \cite{Brackx-Pincket-84} and  complex Clifford algebras \cite{Sommen-87}.  A canonical relation was  shown that  the   Cauchy integral formula  for    monogenic  functions in various setting  may  imply the   Bochner-Martinelli integral formula  for holomorphic functions of several complex variables; see  e.g. \cite{BDDS-09,Bory-18,Li-Ren-22}.

The  $k$-Cauchy-Fueter operator  is a generalization of the
 Cauchy-Fueter operator in quaternionic analysis. In   several quaternionic variables case,    the Bochner-Martinelli integral formula associated to $k$-Cauchy-Fueter operator   had   been discussed  \cite{Wang-10,Wang-Ren-14-JMP}. In 2020,
 the Bochner-Martinelli type formula for   tangential $k$-Cauchy-Fueter operator has also been established \cite{Ren-Shi-Wang-20}.

 As is known,   quaternionic analysis and Clifford analysis deal both  with the case that the involved  algebras are associative. As another  generalization of holomorphic functions of one complex variable,  Dentoni and Sce in 1973 studied regular functions over the non-associative   algebra of octonions \cite{Dentoni-Sce}. These theories  had always been   developing   in parallel, until  the notion of  monogenic (or regular) functions over  alternative algebras appears; see \cite{Perotti-22} and \cite{Ghiloni-Stoppato-24-2}. In \cite{HRX}, the authors  initiated the study of  monogenic functions  of one   hypercomplex variable   over real alternative $\ast$-algebras and developed    fundamental properties, focusing on the Cauchy-Pompeiu integral formula and
Taylor series expansion. In this paper, we    concentrate on the  several hypercomplex variables  case,  which naturally extends the theory of several complex variables to a very general setting and unifies complex, quaternionic,  octonionic, and Clifford analysis.   In particular, in this new setting we establish  some  fundamental results, such as Bochner-Martinelli formula,  Plemelj-Sokhotski formula, and Hartogs extension theorem, which are   cornerstones of   holomorphic functions theory in several complex variables.

The remaining part of  this paper is organized as follows.  In Section 2, we review   basic properties of real alternative $\ast$-algebras and then introduce  monogenic functions of  several hypercomplex variables. Over real alternative $\ast$-algebras,  Section 3 is dedicated to   the Bochner-Martinelli integral formula; see Theorem \ref{Bochner-Martinelli-theorem}, which  reduces to be  the Cauchy-Pompeiu   integral  formula in one hypercomplex variable. In Section 4, we  shall study  the boundary behaviour of the Bochner-Martinelli integral by establishing   the Plemelj-Sokhotski formula in Theorem \ref{Plemelj-Sokhotski-theorem}, which is a first result in the several hypercomplex variables case.
Finally, in Section 5, we discuss the solvability of the inhomogeneous Cauchy-Riemann equation with compactly supported inhomogeneous terms, which allows to prove  a Hartogs extension theorem  in Theorem \ref{Hartogs} for monogenic functions  over  real alternative $\ast$-algebras.
 \section{Preliminaries}
In this section,  for the convenience of the reader, we first recall  some preliminary results for alternative algebras from  \cite{Okubo,Schafer} and  hypercomplex subspaces from \cite{Ghiloni-Stoppato-24-2,Perotti-22}.  With this in hand,  we  can introduce   the notion  of  monogenic   functions of several hypercomplex variables over real alternative    $\ast$-algebras.

\subsection{ Real alternative $\ast$-algebras and hypercomplex subspaces}
\noindent Let $\mathbb{A}$ be a real alternative algebra with a unity.  All  alternative algebras   obey
 the Artin theorem saying  \textit{the subalgebra generated by two elements of $\mathbb  A$ is associative}.
\begin{definition}
A real algebra   $\mathbb  A$    is called to be $\ast$-algebra if $\mathbb  A$ is equipped with an anti-involution, which is a real linear map $^{c}:\mathbb  A \rightarrow \mathbb  A$,  $a\mapsto a^{c}$ satisfying that $a^{c}= a$ for $a\in \mathbb{R}=\mathbb{R} 1\subseteq \mathbb  A $ and
$$ (a^{c})^{c} = a,   \ (ab)^{c} =b^{c}a^{c},  \quad a,b \in\mathbb A.$$
\end{definition}

\begin{assumption}
Assume that $(\mathbb A, +,  \cdot, ^{c})$ is a real alternative $\ast$-algebra with a unity,  of finite dimension $d>1$ as   a real vector space, and  equipped with an anti-involution $^{c}$.    Additionally, we endow $\mathbb A$   with the natural topology and differential structure as a real vector space.
\end{assumption}

It is known that the set of such real alternative $\ast$-algebras is very large, which can include  division algebras, Clifford algebras, and  dual quaternions.
\begin{example} [Division algebras]
The classical division algebras of  the complex numbers $\mathbb{C}$, quaternions $\mathbb{H}$ and  octonions $\mathbb{O}$ are real  $\ast$-algebras, where $\ast$-involutions are  the standard conjugations of  complex numbers, quaternions, and  octonions, respectively.
\end{example}
\begin{example} [Clifford algebras]
 The   Clifford algebra   $\mathbb{R}_{0,m}$ is an associative $\ast$-algebra,  which is generated  by the standard orthonormal basis $\{e_1,e_2,\ldots, e_m\}$ of the $m$-dimensional real Euclidean space  $\mathbb{R}^m$ by assuming   $$e_i e_j + e_j e_i= -2\delta_{ij}, \quad 1\leq i,j\leq m,$$
where    $\ast$-involution is given by the Clifford conjugation.
\end{example}

For each element $x$ in the $\ast$-algebra ${\mathbb A}$, its \textit{trace}  is  given by $t(x):= x+x^{c}\in {\mathbb A}$ and its   (squared) \textit{norm} is $n(x):= xx^{c}\in {\mathbb A}.$ Let
 $$\mathbb{S}_{{\mathbb A}}:= \{x  \in \mathbb A : t(x)=0,  n(x)=1  \}$$
 be the  sphere  of the imaginary units of $\mathbb A$ compatible with the  $\ast$-algebra structure of $\mathbb A$.

For  $J \in \mathbb{S}_{{\mathbb A}}$,   denote by $\mathbb{C}_{J}:=\langle1,J\rangle \cong \mathbb{C},$ the subalgebra of ${\mathbb A}$ generated by $1$ and $J$.
 The quadratic cone of ${\mathbb A}$ is defined  as
$$Q_{{\mathbb A}}:= \mathbb{R} \cup \big\{x \in {\mathbb A} \mid t(x) \in  \mathbb{R}, \ n(x)\in \mathbb{R}, \  4\,  n(x)>t(x)^{2} \big\}.$$
It is known that
$$  Q_{\mathbb A}=\bigcup_{J\in\mathbb S_{\mathbb A}} \mathbb C_J,$$
 and
 $$\mathbb C_I \cap \mathbb C_J=\mathbb R, \qquad \forall\ I, J\in\mathbb S_{\mathbb A}, \ I\neq \pm J.$$
Interested readers can refer to   \cite{Ghiloni-Perotti} for more  properties of the quadratic cone of ${\mathbb A}$.
\begin{assumption}
Assume  $\mathbb{S}_{{\mathbb A}}\neq \emptyset$.
\end{assumption}



\begin{definition}
Let $M$ be a real vector subspace of our $\ast$-algebra $\mathbb A$. An ordered real vector
basis $(v_0,v_1,  \ldots, v_m)$  of $M $ is called a hypercomplex basis of $M$ if: $m \geq 1$; $v_0 =1$; $v_s \in \mathbb{S}_{{\mathbb A}}$
and $v_sv_t = -v_tv_s$ for all distinct $s, t\in \{1,  \ldots, m\}$. The subspace $M$ is called a hypercomplex
subspace of $\mathbb A$ if $\mathbb{R}  \subsetneq   M  \subseteq Q_{{\mathbb A}}$.
\end{definition}

Equivalently, a basis $(v_0=1,v_1,  \ldots, v_m)$ is a hypercomplex basis if, and only if, $t(v_s)=0, n(v_s)=1$
and $t(v_s v_t^{c})=0$ for all distinct $s, t\in \{1,\ldots , m\}$.

\begin{assumption} Within the alternative $\ast$-algebra $\mathbb A$, we fix a hypercomplex subspace $M$, with a fixed hypercomplex basis  $\mathcal{B}=(v_0,v_1,  \ldots, v_m)$. Moreover, $\mathcal{B}$ is completed to a real vector basis $\mathcal{B}' =(v_0, v_1,  \ldots , v_d)$ of  $\mathbb A$ and  $\mathbb A$ is endowed with the standard Euclidean scalar product $\langle\cdot,\cdot\rangle$ and norm $ | \cdot |$ associated to $\mathcal{B}'$. If this is the case, then,   for all $x, y\in M$,
$$t(xy^{c})=t(y^{c}x)=2 \langle x,y\rangle,$$
$$n(x)=n(x^c)=|x|^{2}.$$
 \end{assumption}

The following example allows to  unify several  theories  of monogenic functions in quaternionic, octonionic, and Clifford analysis.

\begin{example}\label{example:strong-regular}  The best-known examples of    hypercomplex subspaces   in real alternative $\ast$-algebras   are given by
\begin{eqnarray*}
M=
\begin{cases}
\mathbb H \ or \  \mathbb H_r,  \quad  \quad \ \mathbb A=\mathbb H, \qquad   \   Q_{\mathbb A}=\mathbb H,
\\
\mathbb O, \qquad  \quad \qquad  \, \mathbb A=\mathbb O, \qquad   \  Q_{\mathbb A}=\mathbb O,
\\
   \mathbb{R}^{m+1}, \qquad  \quad \ \ \mathbb A=\mathbb{R}_{0, m}, \ \ \  Q_{\mathbb A}\supseteq\mathbb R^{m+1},
\end{cases}
\end{eqnarray*}
where $\mathbb H_r$ denotes the three-dimensional subspace  of reduced quaternions
$$\mathbb H_r=\{x=x_0+x_1i+x_2j\in \mathbb{H}: x_0,x_1,x_2\in \mathbb{R}\},$$
and $\mathbb R^{m+1}$ denotes the space of paravectors in $\mathbb{R}_{0, m}$.
\end{example}

Two immediate consequences of the Artin  theorem follow, which are  useful in our subsequent proof.
\begin{proposition}\label{real}
For any $r\in \mathbb{R}$ and $x, y \in \mathbb{A},$ it holds that
$$[r,x,y]=0.$$
\end{proposition}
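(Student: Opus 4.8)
The plan is to reduce everything to the trivial observation that the unity $1$ associates with any pair of elements, together with the fact that, under our identification $\mathbb{R}\hookrightarrow\mathbb{A}$, $r\mapsto r\cdot 1$, a real number is simply a scalar multiple of $1$. First I would recall that the associator $[a,b,c]:=(ab)c-a(bc)$ is $\mathbb{R}$-trilinear, being assembled from the $\mathbb{R}$-bilinear product of $\mathbb{A}$; hence it suffices to treat the case $r=1$, since $[r,x,y]=[r\cdot 1,x,y]=r\,[1,x,y]$. Alternatively, one may invoke the Artin theorem directly: the subalgebra $\mathbb{A}'$ generated by $x$ and $y$ is associative, it is unital and therefore contains $\mathbb{R}\cdot 1$, so $r$, $x$, $y$ all lie in the associative algebra $\mathbb{A}'$ and their associator vanishes.

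Carrying out the first route, the entire computation is the single line
\[
[r,x,y]=\big((r\cdot 1)x\big)y-(r\cdot 1)(xy)=\big(r(1x)\big)y-r\big(1(xy)\big)=(rx)y-r(xy)=r(xy)-r(xy)=0,
\]
where we used that scalar multiplication commutes with the algebra product and that $1$ is a two-sided unit. This is precisely the assertion, $r$ being understood throughout as the element $r\cdot 1\in\mathbb{A}$.

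There is no genuine obstacle here: the only point requiring a little care is the bookkeeping around the embedding $\mathbb{R}\hookrightarrow\mathbb{A}$ and the $\mathbb{R}$-trilinearity of the associator, both of which are immediate from the definition of a real algebra. In the write-up I would present the one-line identity above as the proof and merely remark that, as stated in the text, it is equally an immediate consequence of the Artin theorem.
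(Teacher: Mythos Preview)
Your proof is correct. The paper does not spell out a proof at all: it simply introduces this proposition (together with Proposition~\ref{artin}) by the sentence ``Two immediate consequences of the Artin theorem follow,'' so the intended argument is exactly your second route---$r=r\cdot 1$ lies in the associative subalgebra generated by $x$ and $y$. Your first route is in fact slightly more elementary, since it uses only that $\mathbb{A}$ is a unital real algebra (bilinearity of the product plus $1\cdot a=a$) and does not require alternativity or Artin's theorem at all; either version is perfectly acceptable here.
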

\begin{proposition}\label{artin}
For any $x\in M$ and $y \in \mathbb{A},$ it holds that
$$[x,x,y]=[x^{c},x,y]=0.$$
\end{proposition}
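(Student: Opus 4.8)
The plan is to reduce both identities to the Artin theorem (equivalently, the alternative laws) together with Proposition \ref{real}. Recall that the associator $[a,b,c]:=(ab)c-a(bc)$ is $\mathbb R$-trilinear, since multiplication in $\mathbb A$ is $\mathbb R$-bilinear. For the first identity $[x,x,y]=0$, note that $[x,x,y]$ only involves the two elements $x$ and $y$, hence it lies in the subalgebra of $\mathbb A$ generated by $\{x,y\}$, which is associative by the Artin theorem; therefore it vanishes. (In particular this part does not even require $x\in M$.)

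For the second identity $[x^{c},x,y]=0$ the same argument fails directly, because $[x^{c},x,y]$ involves \emph{three} elements $x^{c},x,y$, and the subalgebra they generate need not be associative. The key step is instead the observation that for $x\in M$ the conjugate $x^{c}$ differs from $x$ only by a real scalar. Writing $x=x_0 v_0+x_1 v_1+\cdots+x_m v_m$ in the fixed hypercomplex basis $\mathcal B=(v_0,v_1,\ldots,v_m)$ with $v_0=1$, we have $v_0^{c}=1$ and, since $t(v_s)=0$ forces $v_s^{c}=-v_s$ for $s\geq1$, it follows that
$$x^{c}=x_0-\big(x_1 v_1+\cdots+x_m v_m\big)=2x_0-x=t(x)-x,$$
with $t(x)=2x_0\in\mathbb R$ (this last fact is also immediate from $M\subseteq Q_{\mathbb A}$).

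It then suffices to expand by trilinearity of the associator,
$$[x^{c},x,y]=[\,t(x)-x,\,x,\,y\,]=[\,t(x),\,x,\,y\,]-[\,x,x,y\,],$$
where the first term vanishes by Proposition \ref{real} because $t(x)\in\mathbb R$, and the second vanishes by the first identity already proved. This completes the argument. I do not expect any genuine obstacle here: the only point needing care is the identity $x^{c}=t(x)-x$ together with $t(x)\in\mathbb R$, which is precisely the place where the hypothesis $x\in M$ (a hypercomplex subspace, hence contained in the quadratic cone) is used; everything else is trilinearity and the Artin theorem.
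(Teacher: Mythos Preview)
Your proof is correct. The paper itself does not give a proof of this proposition: it simply announces Propositions~\ref{real} and~\ref{artin} as ``two immediate consequences of the Artin theorem'' and states them without argument. Your write-up is exactly the natural way to make that remark precise, using $x^{c}=t(x)-x$ with $t(x)\in\mathbb R$ (valid because $M\subseteq Q_{\mathbb A}$) together with trilinearity of the associator and Proposition~\ref{real}. An equivalent one-line phrasing you might prefer is: since $x^{c}=t(x)-x$ lies in the unital subalgebra generated by $x$, all three of $x^{c},x,y$ lie in the subalgebra generated by $\{x,y\}$, which is associative by Artin's theorem; but this is the same argument in different clothing.
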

At the end of this subsection, we recall a result from \cite[Remark 2.27]{Ghiloni-Stoppato-24-2}.
\begin{proposition}\label{norm}
 There exists some $C\geq1$ such that
$$|xy|\leq C | x |  | y |, \quad x\in M,y \in \mathbb{A}.$$
\end{proposition}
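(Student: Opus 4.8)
The plan is to exploit that multiplication in $\mathbb A$ is an $\mathbb R$-bilinear map between finite-dimensional normed spaces, hence automatically bounded; restricting the first argument to the subspace $M$ then costs nothing. Concretely, I would first recall that $|\cdot|$ is the Euclidean norm attached to the fixed basis $\mathcal B'=(v_0,\dots,v_d)$ and introduce the structure constants $c_{st}^r\in\mathbb R$ determined by $v_sv_t=\sum_{r=0}^d c_{st}^r v_r$. Writing $x=\sum_s x_sv_s$, $y=\sum_t y_tv_t$ and using bilinearity gives $xy=\sum_r\bigl(\sum_{s,t}c_{st}^r x_sy_t\bigr)v_r$, so $|xy|^2=\sum_r\bigl(\sum_{s,t}c_{st}^r x_sy_t\bigr)^2$. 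The Cauchy--Schwarz inequality, applied in the index pair $(s,t)$ together with the factorization $\sum_{s,t}(x_sy_t)^2=\bigl(\sum_s x_s^2\bigr)\bigl(\sum_t y_t^2\bigr)=|x|^2|y|^2$, bounds each inner sum by $\bigl(\sum_{s,t}(c_{st}^r)^2\bigr)^{1/2}|x|\,|y|$, whence $|xy|\le K|x|\,|y|$ with $K:=\bigl(\sum_{r,s,t}(c_{st}^r)^2\bigr)^{1/2}$. Taking $C:=\max\{K,1\}\ge 1$ already gives the inequality for all $x,y\in\mathbb A$, and a fortiori for $x\in M$.

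Alternatively --- and this is the viewpoint I would emphasize, as it explains cleanly why one may restrict to $x\in M$ --- one can argue by compactness. The map $x\mapsto L_x$, where $L_x\colon\mathbb A\to\mathbb A$, $y\mapsto xy$, is $\mathbb R$-linear from $\mathbb A$ into the finite-dimensional space $\mathrm{End}_{\mathbb R}(\mathbb A)$, hence continuous, so $x\mapsto\|L_x\|_{\mathrm{op}}$ is continuous on $\mathbb A$. Since $M$ is finite-dimensional, its unit sphere $\{x\in M:|x|=1\}$ is compact, and $C_0:=\max\{\|L_x\|_{\mathrm{op}}:x\in M,\ |x|=1\}$ is finite. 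For $x\in M\setminus\{0\}$ one writes $|xy|=|x|\,\bigl|L_{x/|x|}y\bigr|\le C_0|x|\,|y|$, the case $x=0$ being trivial; once more set $C:=\max\{C_0,1\}$.

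I do not expect a genuine obstacle here: the statement is a soft consequence of finite-dimensionality. The only mild points are that $\mathbb A$ need not be a normed algebra in the strict sense --- the Euclidean norm attached to $\mathcal B'$ need not be submultiplicative, so a nontrivial constant may really be needed --- and that the normalization $C\ge 1$, convenient for the iterated estimates in later sections, is obtained harmlessly by passing from $C$ to $\max\{C,1\}$.
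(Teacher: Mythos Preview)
Your argument is correct: both the structure-constant/Cauchy--Schwarz computation and the compactness argument via $x\mapsto\|L_x\|_{\mathrm{op}}$ are valid proofs, and the passage to $C:=\max\{C_0,1\}$ is the right way to secure $C\ge1$. Note, however, that the paper does not actually prove this proposition---it merely quotes it from \cite{Ghiloni-Stoppato-24-2}---so there is no ``paper's own proof'' to compare against; your write-up supplies what the paper omits.
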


\subsection{Monogenic functions of several hypercomplex variables}

Throughout this paper,  an element   $x=(x_1,\ldots,x_n)\in (\R^{m+1})^{n}=\R^{(m+1)n}$ shall be identified with the element in $M^{n}$, via
$$ x_{j}= \sum_{s=0}^{m}x_{j,s}v_s =\sum_{s=0}^{m}v_s x_{j,s} \in M,  \quad j=1,\ldots,n, $$
with $x_{j,s}\in \R$.

 We consider the functions  $f:\Omega\longrightarrow  \mathbb{A}$, where $\Omega\subseteq \R^{(m+1)n}$ is a domain (i.e., connected and open set). As usual, denote by $\mathbb{N}=\{0,1,2,\ldots\}$ all non-negative integers.
 For $k\in \mathbb{N}\cup \{\infty\}$,    denote by $C^{k}(\Omega,\mathbb{A})$ the set of all functions  $f (x)=\sum_{s=0}^{d}  v_{s}   f_{s}(x)$ with real-valued $f_{s}(x)\in  C^{k}(\Omega)$. In this setting, the global Dirac operator is given by
$$\overline{\partial}=(\overline{\partial}_{1},\ldots,\overline{\partial}_{n}),$$
where the Dirac operator $\overline{\partial}_{j}= \overline{\partial}_{x_j}$ corresponding to the $j$-th copy of $M$ is
$$\overline{\partial}_{j}=\overline{ \partial}_{x_{j}}= \sum_{s=0}^{m}v_s\partial_{x_{j,s}},  \quad j=1,\ldots,n. $$

Now  we  give the definition of  monogenic functions in  several hypercomplex variables over real alternative $\ast$-algebras.

\begin{definition}[Monogenic  functions] \label{monogenic-Clifford}
Let $n\in \mathbb{N}\setminus\{0\}$ and   let  $f\in C^{1}(\Omega,\mathbb{A})$, where $\Omega $ is  a domain   in $M^{n}$.  The function $f (x)=\sum_{s=0}^{d}  v_{s}   f_{s}(x)$ is called  left monogenic  with respect to $\mathcal{B}$ in $\Omega $ if
$$\overline{\partial} f(x)=(\overline{\partial}_{ 1}f(x),\ldots,\overline{\partial}_{ n}f(x))=0, \quad x\in \Omega,$$
where
$$ \overline{\partial}_{ j}f(x)  =\sum _{s=0}^{m}v_{s} \frac{\partial f}{\partial x_{j, s}}(x)
= \sum _{s =0}^{m} \sum _{t=0}^{d}   v_{s}v_{t} \frac{\partial f_{t}}{\partial x_{j,s}}(x)=0, \quad j=1,\ldots,n. $$
Similarly, the function $f$ is called  right  monogenic  with respect to $\mathcal{B}$  in $\Omega $ if
$$f(x)\overline{\partial} =(f(x)\overline{\partial}_{ 1},\ldots,f(x)\overline{\partial}_{ n})=0, \quad x\in \Omega,$$
where
$$f(x) \overline{\partial}_{ j} =\sum _{s=0}^{m}\frac{\partial f}{\partial x_{j, s}}(x)v_{s}
= \sum _{s =0}^{m} \sum _{t=0}^{d}   \frac{\partial f_{t}}{\partial x_{j,s}}(x)v_{t}v_{s}=0, \quad j=1,\ldots,n. $$

\end{definition}
Denote by $\mathcal {M}(\Omega, \mathbb{A})$ or $\mathcal {M}^{L}(\Omega, \mathbb{A})$  (resp. $\mathcal {M}^{R}(\Omega, \mathbb{A})$)   the function class of  all  left  (resp. right) monogenic functions   $f:\Omega \rightarrow \mathbb{A}$.

\begin{remark}\label{C1-C00}{\rm
 When $\mathbb{A}$ is  associative, such as   $\mathbb{A}=\mathbb{R}_{0,m}$,      $\mathcal {M}^{L}(\Omega, \mathbb{A})$ is   a right $\mathbb{A}$-module.
Meanwhile, when  $\mathbb{A}$ is     non-associative, such as $\mathbb{A}=\mathbb{O}$,  $\mathcal{M}^{L}(\Omega, \mathbb{A})$ is generally  not a right   $\mathbb{A}$-module.}
\end{remark}

\begin{remark}\label{harmonic}
{\rm By Theorem \ref{Bochner-Martinelli-theorem} and Remark \ref{kernel-harmonic-remark} below,
all functions  in $\mathcal {M}(\Omega,  \mathbb{A})$ are harmonic and then are of class $C^{\infty}(\Omega,\mathbb{A})$.}  \end{remark}

\subsection{Integrals  on a hypercomplex subspace}
To discuss  the integral representation     over   real alternative $\ast$-algebras in   subsequent sections,  we give some notations and properties of    integrals over   real alternative $\ast$-algebras.
\begin{definition}\label{integral}
Let    $\Omega $ be a   domain in $M^{n}$. For $f=\sum_{t=0}^{d}f_tv_t:  \Omega $  $\rightarrow \mathbb{A}  $ with real-valued integrable  components  $f_t$, define
$$\int_{\Omega} f dV:=\sum_{t=0}^{d}v_t \big(\int_{\Omega}f_t dV\big),  $$
where $dV$ stands  for  the   classical   Lebesgue  volume element  in $\mathbb{R}^{(m+1)n}$.
\end{definition}

\begin{proposition}\label{absolute-domain}
Let $f, g$ be as in Definition \ref{integral}. The following properties hold true for all $a \in \mathbb{A}$:\\
(i)  $\int_{\Omega} f dV=\int_{\Omega_1} f dV+\int_{\Omega_2} f dV$, where $ \Omega=\Omega_1 \sqcup \Omega_2$;\\
(ii) $\int_{\Omega} (f+g) dV= \int_{\Omega} f dV + \int_{\Omega} gdV;$\\
(iii) $\int_{\Omega} (af ) dV=a(\int_{\Omega} f dV),  \int_{\Omega} ( fa) dV=(\int_{\Omega} f dV)a;$\\
(iv)  $|\int_{\Omega} f dV| \leq\int_{\Omega} |f| dV$.
\end{proposition}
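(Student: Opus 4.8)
The plan is to derive all four items from a single structural remark: by Definition \ref{integral}, the $\mathbb{A}$-valued integral is built componentwise out of the ordinary scalar Lebesgue integral on $\mathbb{R}^{(m+1)n}$, which is $\mathbb{R}$-linear and finitely additive over disjoint (open) sets. Consequently the map $f\mapsto\int_\Omega f\,dV$ is additive, is $\mathbb{R}$-homogeneous, commutes with every $\mathbb{R}$-linear endomorphism of $\mathbb{A}$, and commutes with every $\mathbb{R}$-linear functional $\mathbb{A}\to\mathbb{R}$. Item (i) is then immediate: for $f=\sum_{t=0}^{d}f_t v_t$ one applies $\int_\Omega f_t\,dV=\int_{\Omega_1}f_t\,dV+\int_{\Omega_2}f_t\,dV$ for each $t$ and sums against $v_t$.

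For (ii) and (iii), fix $a\in\mathbb{A}$ and consider the right- and left-multiplication maps $R_a,L_a:\mathbb{A}\to\mathbb{A}$, $R_a(z)=za$, $L_a(z)=az$. Although $\mathbb{A}$ is in general neither commutative nor associative, $a$ is a \emph{constant}, so $R_a$ and $L_a$ are genuine $\mathbb{R}$-linear endomorphisms of the finite-dimensional space $\mathbb{A}$. Writing $a=\sum_s a_s v_s$ with $a_s\in\mathbb{R}$ and $f=\sum_t f_t v_t$, and expanding each product $v_t v_s$ in the basis $\mathcal{B}'$, one sees that the components of $fa$ are fixed $\mathbb{R}$-linear combinations of the $f_t$ with constant coefficients (here using that $\int_\Omega f_t\,dV\in\mathbb{R}$ and Proposition \ref{real} to move scalars freely). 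Linearity of the scalar integral then gives $\int_\Omega (fa)\,dV=R_a\!\left(\int_\Omega f\,dV\right)=\left(\int_\Omega f\,dV\right)a$, and combining this with additivity of the integral yields (ii); the argument for $L_a$ is identical and gives (iii).

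For (iv), set $w:=\int_\Omega f\,dV\in\mathbb{A}$. If $w=0$ there is nothing to prove, so assume $w\neq 0$ and put $\xi:=w/|w|$, a unit vector for the Euclidean norm $|\cdot|$ attached to $\mathcal{B}'$. Since $\langle\cdot,\xi\rangle$ is $\mathbb{R}$-linear, $|w|=\langle w,\xi\rangle=\int_\Omega\langle f(x),\xi\rangle\,dV$, and by the Cauchy--Schwarz inequality $\langle f(x),\xi\rangle\leq |f(x)|\,|\xi|=|f(x)|$ pointwise; hence $|w|\leq\int_\Omega|f(x)|\,dV$. (Note that $|f|$ is integrable whenever the $f_t$ are, so the right-hand side makes sense.)

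There is no real obstacle here — the statement is essentially bookkeeping. The one point that deserves care is that the non-associativity and non-commutativity of $\mathbb{A}$ forbid ``pulling $a$ through'' a product; but this is never needed, because multiplication by the constant $a$ is already an $\mathbb{R}$-linear map of $\mathbb{A}$, which is all that the componentwise definition of the integral requires.
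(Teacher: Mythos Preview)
Your proof is correct. The paper itself does not prove this proposition; it is stated without proof and attributed to \cite{HRX} (see the closing paragraph of the preliminaries introduction), so there is no argument in the paper to compare against. Your approach---reducing (i)--(iii) to the $\mathbb{R}$-linearity of the componentwise integral together with the fact that left and right multiplication by a fixed $a\in\mathbb{A}$ are $\mathbb{R}$-linear endomorphisms, and handling (iv) via the linear functional $\langle\,\cdot\,,\xi\rangle$ with $\xi=w/|w|$---is the natural one and is fully rigorous. The only subtlety, which you handle, is that non-associativity never enters because scalars $\int_\Omega f_t\,dV\in\mathbb{R}$ can be moved freely through products (Proposition~\ref{real}), and that $|f|\le\sum_t|f_t|$ guarantees integrability of $|f|$ since $\mathcal{B}'$ is orthonormal for $\langle\cdot,\cdot\rangle$ by Assumption~3.
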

\begin{proof}
 The results in (i) and (ii) do not involve associativity and  hold naturally.

(iii)  Let $P=\sqcup_{i} P_i$ be a partition of the domain  $\Omega$ and $\lambda(P)$ be the mesh of the partition $P$. By definition, we have, for the  integrable function $f=\sum_{t=0}^{d}f_tv_t: \Omega\rightarrow \mathbb{A}$,
$$\int_{\Omega} f dV=\sum_{t=0}^{d}v_t\Big(\lim_{\lambda(P)\rightarrow 0} \sum_i f_t(x_i) |P_i|\Big), $$
where  $x_i\in P_i$ and $|P_i|$ is the Lebesgue  volume of $P_i$.\\
Note that $d$ is finite, we get
$$\int_{\Omega} f dV=\lim_{\lambda(P)\rightarrow 0} \sum_i \Big( \sum_{t=0}^{d}v_t f_t(x_i)\Big) |P_i|=\lim_{\lambda(P)\rightarrow 0} \sum_i f(x_i) |P_i|.  $$
Hence, observing that $|P_i|$ are real,   we have
$$\int_{\Omega} (af) dV=\lim_{\lambda(P)\rightarrow 0} \sum_i (af(x_i)) |P_i|=a\Big(\lim_{\lambda(P)\rightarrow 0} \sum_i  f(x_i) |P_i|\Big)=a\Big(\int_{\Omega} f dV\Big),$$
and
$$\int_{\Omega} ( fa) dV=\lim_{\lambda(P)\rightarrow 0} \sum_i ( f(x_i)a) |P_i|= \Big(\lim_{\lambda(P)\rightarrow 0} \sum_i  f(x_i) |P_i|\Big) a= \Big(\int_{\Omega} f dV \Big) a.$$

(iv)  We only need to consider the case $\int_{\Omega} |f| dV<+\infty$.  From the proof of (iii), it holds that
$$\int_{\Omega} f dV =\lim_{\lambda(P)\rightarrow 0} \sum_i f(x_i) |P_i|.  $$
Hence,
$$ |\int_{\Omega} f dV| =\lim_{\lambda(P)\rightarrow 0} |\sum_i f(x_i) |P_i|| \leq
\lim_{\lambda(P)\rightarrow 0}\sum_i |f(x_i)| |P_i| =\int_{\Omega} |f| dV,  $$
which completes the proof.
\end{proof}
Similarly, we can define integrals over   surfaces.
 \begin{definition}\label{integral-surface}
Let $\Gamma$ be a  surface in $M^{n}$. For $f=\sum_{t=0}^{d}f_tv_t : \Gamma\rightarrow \mathbb{A}  $ with real-valued integrable  components  $f_t$, define
$$\int_{\Gamma} f dS:=\sum_{t=0}^{d}v_t \big(\int_{\Gamma}f_t dS\big),  $$
where $dS$ stands  for  the   classical   Lebesgue  surface element  in $\mathbb{R}^{(m+1)n}$.
\end{definition}
 We also have following  basic properties as in Proposition \ref{absolute-domain}.
 \begin{proposition}\label{absolute-boundary}
Let $f, g$ be as in Definition \ref{integral-surface}. The following properties hold true for all $a\in \mathbb{A}$:\\
(i)  $\int_{\Gamma} f dS=\int_{\Gamma_1} f dS+\int_{\Gamma_2} f dS$, where $\Gamma=\Gamma_1 \sqcup \Gamma_2$;\\
(ii) $\int_{\Gamma} (f +g) dS= \int_{\Gamma} f dS +\int_{\Gamma} gdS;$\\
(iii) $\int_{\Gamma} (af) dS=a(\int_{\Gamma} f dS), \int_{\Gamma} (fa) dS= (\int_{\Gamma} f dS)a$;\\
(iv)  $  |\int_{\Gamma} f dS | \leq \int_{\Gamma} |f| dS.$
\end{proposition}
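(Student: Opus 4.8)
\emph{Proof idea.} The plan is to argue exactly as for the volume-integral statement Proposition~\ref{absolute-domain}, reducing each assertion to the corresponding property of the classical real surface integral applied to the scalar components. Write $f=\sum_{t=0}^d f_t v_t$, $g=\sum_{t=0}^d g_t v_t$ with $f_t,g_t$ real and integrable on $\Gamma$, and $a=\sum_{s=0}^d a_s v_s$, $b=\sum_{s=0}^d b_s v_s$ with $a_s,b_s\in\R$. First I would check that every integral in the statement is well defined: since $|f|=\big(\sum_t f_t^2\big)^{1/2}\le\sum_t|f_t|$ and a finite sum of integrable functions is integrable, $|f|$ --- and likewise $|g|$, $|fa|$, $|af|$ --- is integrable on $\Gamma$.

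Assertion (i) is then immediate from Definition~\ref{integral-surface} and finite additivity of the real surface integral on each $f_t$; the same componentwise reduction gives additivity of $\int_\Gamma\cdot\,dS$ in the integrand, so for (ii) and (iii) it is enough to prove the one-term identities $\int_\Gamma(fa)\,dS=\big(\int_\Gamma f\,dS\big)a$ and $\int_\Gamma(af)\,dS=a\int_\Gamma f\,dS$ and then add the two contributions. The single idea needed here is that, for a fixed $a\in\mathbb A$, the right and left multiplications $R_a\colon y\mapsto ya$ and $L_a\colon y\mapsto ay$ are $\R$-linear maps of $\mathbb A$ (this uses only $\R$-bilinearity of the product, not associativity), and that any $\R$-linear map $T\colon\mathbb A\to\mathbb A$ commutes with the componentwise integral: if $T(v_t)=\sum_r T_{rt}v_r$ with $T_{rt}\in\R$, then
$$\int_\Gamma T(f)\,dS=\sum_r v_r\sum_t T_{rt}\int_\Gamma f_t\,dS=\sum_t\Big(\int_\Gamma f_t\,dS\Big)T(v_t)=T\Big(\int_\Gamma f\,dS\Big).$$
Taking $T=R_a$ and $T=L_a$ yields (ii) and (iii).

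For (iv) I would use that $\mathcal B'$ is orthonormal for $\langle\cdot,\cdot\rangle$, so that $w:=\int_\Gamma f\,dS$ has coordinate vector $\big(\int_\Gamma f_t\,dS\big)_{t}$ in $\mathbb A\cong\R^{d+1}$ and $\langle w,y\rangle=\sum_t\big(\int_\Gamma f_t\,dS\big)y_t$ for $y=\sum_t v_t y_t$. If $w=0$ there is nothing to prove; otherwise, pulling the constant vector $w$ inside the componentwise real integral and applying the pointwise Cauchy--Schwarz inequality gives
$$|w|^2=\langle w,w\rangle=\int_\Gamma\langle w,f\rangle\,dS\le\int_\Gamma|w|\,|f|\,dS=|w|\int_\Gamma|f|\,dS,$$
and dividing by $|w|>0$ completes the proof. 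There is no substantive obstacle: the statement is essentially a bookkeeping lemma, and the only points worth a second glance are the integrability of $|f|$ used to make sense of (iv) and the elementary fact that componentwise integration commutes with $\R$-linear self-maps of $\mathbb A$ used in (ii)--(iii).
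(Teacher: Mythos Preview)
The paper states Proposition~\ref{absolute-boundary} (like Proposition~\ref{absolute-domain}) without proof, treating it as an elementary consequence of Definition~\ref{integral-surface}; your componentwise argument is correct and supplies precisely the routine verification the authors omit.
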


\section{Bochner-Martinelli formula}

For $j\in \{1,\ldots,n\}$,  define the functions $K_{j} : M^{n}\setminus\{0\} \rightarrow M$ as
\begin{eqnarray*}\label{BMK-Component}
K_{j}(x) :=\frac{1}{\sigma_{(m+1)n}}\frac{ x_{j} ^{c} }{|x|^{(m+1)n}},
\end{eqnarray*}
where $\sigma_{(m+1)n} $ is the surface area of the unit $((m+1)n-1)$-sphere in $\mathbb{R}^{(m+1)n}$.

Given $x\in M^{n}$,  define the $((m+1)n-1)$-form in $M^{n}\setminus\{x\}$, called   the  Bochner-Martinelli  kernel, as
\begin{eqnarray}\label{BMK}
K(x,y) \equiv K(y-x):=\sum\limits_{j=1}^n K_j(y-x)d\Sigma_{y_j},
\end{eqnarray}
where
$$d\Sigma_{y_j}=(-1)^{(j-1)(m+1)} d y_1 \wedge d y_2 \wedge\cdots\wedge d y_{j-1} \wedge d y[j]\wedge d y_{j+1} \wedge\cdots\wedge  d y_n,$$
with
 \begin{eqnarray*}
 d y_j &=&dy_{j,0}\wedge dy_{j,1}\wedge\cdots \wedge dy_{j,m},
 \\
 d y[j] &=&\sum\limits_{s=0}^m (-1)^{s}v_sd\widehat{y_{j,s}},
 \\
d\widehat{y_{j,s}}&=&dy_{j,0}\wedge\cdots\wedge dy_{j,s-1 }\wedge dy_{j, s+1 }\wedge\cdots\wedge dy_{j,m}.
\end{eqnarray*}

A remarkable feature of the  Bochner-Martinelli  kernel is    its universality  in the sense that it  does not depend on the choice of domains.
 It   should be pointed out that the  Bochner-Martinelli  kernel is  $C^{\infty}$ for $y\neq x$  and has a singularity of order $(m+1)n-1$ at $y=x$.
When $n= 1$,    the Bochner-Martinelli kernel is nothing but the Cauchy kernel.

\begin{example}\label{Cauchy-kernel-example}
 The Cauchy kernel is given by
$$E(x):=\frac{1}{\sigma_{m+1}}\frac{x^{c} }{|x|^{m+1}}, \quad x \in\Omega=M\setminus \{0\},$$
where $\sigma_{m+1}=2\frac{\Gamma^{m+1}(\frac{1}{2}) }{\Gamma (\frac{m+1}{2})} $ is the surface area of the unit ball in $\mathbb{R}^{m+1}$. \\
Then $E \in \mathcal {M}^{L}(\Omega,  M) \cap{\mathcal{M}}^{R}(\Omega,  M).$
\end{example}
 The fact that the Cauchy kernel is left (and right) monogenic  is omitted here since its proof  could  be included in the   proof of Lemma \ref{BM-sum-0} below.

We consider   a bounded  domain   $\Omega$   in $M^{n}$ with piecewise smooth boundary $\Gamma:=\partial \Omega$ and denote by
 $$\nu(y)=(\nu_1(y), \ldots, \nu_n(y)): \Gamma \rightarrow  M^{n}$$
  the unit exterior normal to $\Gamma $ at $y$.
As is known, the  Bochner-Martinelli  kernel in  (\ref{BMK}) takes the form of
$$K(x,y)  = \sum_{j=1}^{n} K_{j}(y-x) \nu_j(y)dS(y),  $$
where $dS$   stands  for  the   classical  scalar element  of surface area on $\Gamma$ in $\mathbb{R}^{(m+1)n}$.

 For a function $f:\Omega \rightarrow \mathbb{A}$, define
 $$K(x,y)\ast f(y):=\sum\limits_{j=1}^n K_j(y-x)(d\Sigma_{y_j}f(y)),$$
equivalently,
 $$K(x,y)\ast f(y) = \sum_{j=1}^{n} K_{j}(y-x) (\nu_j(y)f(y))dS(y). $$

Now we can state  the Bochner-Martinelli theorem for  smooth functions of several hypercomplex variables over real alternative $\ast$-algebras as follows,  which  subsumes the  Clifford case  \cite[Theorem 3.5]{Ren-Wang-14} and the octonionic case \cite[Theorem 7.8]{Wang-Ren-14}.
 \begin{theorem}[Bochner-Martinelli]\label{Bochner-Martinelli-theorem}
Let    $\Omega$ be a bounded domain in $M^{n}$ with piecewise smooth boundary $\Gamma$. If $f\in C^1( \overline{\Omega}, \mathbb{A}) $, then
 \begin{eqnarray*}
 \int_{\Gamma}  K(x,y)\ast f(y) -
\sum_{j=1}^{n}\int_{\Omega} K_{j}(y-x)  (\overline{\partial}_{j} f(y)) dV(y)=
\left\{
\begin{array}{ll}
f(x), \quad  x \in \Omega,
\\
0, \quad \quad x \in \Omega^{-},
\end{array}
\right.
\end{eqnarray*}
where    $dV(y)=d y_1 \wedge d y_2 \wedge\cdots\wedge  d y_n$ stands  for  the   classical   Lebesgue   volume element  in $\mathbb{R}^{(m+1)n}$ and $\Omega^{-}:=M^{n}\setminus \overline{\Omega}$.
\end{theorem}

When $n=1,$  Theorem \ref{Bochner-Martinelli-theorem} reduces  to  the Cauchy-Pompeiu (or called Cauchy-Green) integral  formula \cite[Theorem 3.6]{HRX}.
\begin{theorem}[Cauchy-Pompeiu]\label{Cauchy-Pompeiu}
Let    $\Omega$ be a bounded domain in $M$ with piecewise smooth boundary $\Gamma$. If $f\in C^1( \overline{\Omega}, \mathbb{A}) $, then
 \begin{eqnarray*}\label{CP-theorem}
\int_{\Gamma}  E(y-x) (\nu(y)f(y)) dS(y)-
\int_{\Omega} E (y-x)  (\overline{\partial} f(y)) dV(y)=
\left\{
\begin{array}{ll}
f(x), \quad  x \in \Omega,
\\
0, \quad \quad x \in \Omega^{-},
\end{array}
\right.
\end{eqnarray*}
where  $\nu(y)$ is the unit exterior normal to $\Gamma$ at $y$,
 $dS$ and  $dV$ stand  for  the   classical   Lebesgue surface  element and volume element  in $\mathbb{R}^{m+1}$, respectively.
\end{theorem}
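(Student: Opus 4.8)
The plan is to obtain Theorem \ref{Cauchy-Pompeiu} as the case $n=1$ of the Bochner--Martinelli Theorem \ref{Bochner-Martinelli-theorem}. When $n=1$ the index $(m+1)n$ becomes $m+1$, so $\sigma_{(m+1)n}=\sigma_{m+1}$ and $K_1(y-x)=E(y-x)$; the only surface block is $d\Sigma_{y_1}=dy[1]=\sum_{s=0}^m(-1)^sv_s\,d\widehat{y_{1,s}}$, so that $K(x,y)\ast f(y)=E(y-x)\bigl(\nu(y)f(y)\bigr)dS(y)$; and the outer sum in (\ref{BM-theorem}) has the single term $\int_\Omega E(y-x)(\overline{\partial}f(y))\,dV(y)$, with $dV$ the volume element of $\mathbb R^{m+1}$. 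With these identifications (\ref{BM-theorem}) is word for word the assertion of Theorem \ref{Cauchy-Pompeiu}. So the substance lies in Theorem \ref{Bochner-Martinelli-theorem}, and I sketch that proof in a form from which the $n=1$ case is read off directly.

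The heart of the matter is the pointwise identity, valid for fixed $x$ and all $y\ne x$,
\[
d_y\bigl(K(x,y)\ast f(y)\bigr)=\sum_{j=1}^n K_j(y-x)\bigl(\overline{\partial}_j f(y)\bigr)\,dV(y).
\]
I would prove it by expanding $d\Sigma_{y_j}$ and $dy[j]$ and differentiating termwise: only $\partial_{y_{j,s}}$ survives the wedge with $d\widehat{y_{j,s}}$, and once the signs $(-1)^{(j-1)(m+1)}$ and $(-1)^s$ cancel, one is left with a piece in which the derivative falls on $f$ — which, by $\mathbb R$-linearity of left multiplication, reassembles into the right-hand side above — and a piece $\bigl(\sum_{j,s}(\partial_{y_{j,s}}K_j(y-x))(v_s f(y))\bigr)dV$ in which it falls on $K_j$. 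The latter has to vanish: differentiating $K_j(z)=\sigma_{(m+1)n}^{-1}z_j^{c}|z|^{-(m+1)n}$ and using Propositions \ref{real} and \ref{artin} — legitimate because $v_s,z_j\in M$ — to carry out all reassociations, it collapses to $\bigl((m+1)n-(m+1)n\sum_j|z_j|^2/|z|^2\bigr)\sigma_{(m+1)n}^{-1}|z|^{-(m+1)n}f(y)$, which is $0$ since $\sum_j|z_j|^2=|z|^2$. For $n=1$ this is nothing but the right monogenicity of $E$ on $M\setminus\{0\}$ recorded in Example \ref{Cauchy-kernel-example}.

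Granting the identity, the formula follows from Stokes' theorem. If $x\in\Omega^-$, then $K(x,\cdot)\ast f$ is of class $C^1$ near $\overline{\Omega}$ and Stokes gives $\int_\Gamma K(x,y)\ast f(y)=\sum_j\int_\Omega K_j(y-x)(\overline{\partial}_j f(y))\,dV(y)$, so the right-hand side of (\ref{BM-theorem}) is $0$. If $x\in\Omega$, choose $\varepsilon>0$ with $\overline{B(x,\varepsilon)}\subset\Omega$ and apply Stokes on $\Omega_\varepsilon:=\Omega\setminus\overline{B(x,\varepsilon)}$, where the form is $C^1$:
\[
\int_\Gamma K(x,y)\ast f(y)-\int_{\partial B(x,\varepsilon)}K(x,y)\ast f(y)=\sum_{j=1}^n\int_{\Omega_\varepsilon}K_j(y-x)\bigl(\overline{\partial}_j f(y)\bigr)\,dV(y),
\]
with $\partial B(x,\varepsilon)$ oriented away from $x$. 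Let $\varepsilon\to0$. Since $|K_j(y-x)|\le C|y-x|^{-((m+1)n-1)}$ by Proposition \ref{norm} and this is integrable in $\mathbb R^{(m+1)n}$, the volume integral tends to $\sum_j\int_\Omega K_j(y-x)(\overline{\partial}_j f(y))\,dV(y)$; and on $\partial B(x,\varepsilon)$, writing the block $\nu_j(y)=(y_j-x_j)/\varepsilon\in M$ and $K_j(y-x)=\sigma_{(m+1)n}^{-1}\varepsilon^{-((m+1)n-1)}\nu_j(y)^{c}$, Proposition \ref{artin} gives $\sum_j K_j(y-x)(\nu_j(y)f(y))=\sigma_{(m+1)n}^{-1}\varepsilon^{-((m+1)n-1)}f(y)$ because $\sum_j n(\nu_j(y))=|\nu(y)|^2=1$, so that $\int_{\partial B(x,\varepsilon)}K(x,y)\ast f(y)$ is the spherical mean of $f$ over $\partial B(x,\varepsilon)$ and converges to $f(x)$. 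Adding the two limits gives $f(x)$.

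The step I expect to be the real obstacle is the pointwise identity: one has to check that the Leibniz rule for $d$ is compatible with the non-associative product as long as every multiplication stays anchored in the hypercomplex subspace $M$, track precisely where the units $v_s$ sit inside the wedge products, and exhibit the cancellation of the ``$\partial K$''-term — which is exactly where the hypothesis $\mathbb R\subsetneq M\subseteq Q_{\mathbb A}$ (so that $K_j$, the $v_s$, and the normal blocks all lie in $M$) and Proposition \ref{artin} are indispensable, and where the dimension relation $(m+1)n=\dim_{\mathbb R}\mathbb R^{(m+1)n}$ is consumed. For Theorem \ref{Cauchy-Pompeiu} in isolation there is no such obstacle, since its instance of the identity is already recorded in Example \ref{Cauchy-kernel-example}.
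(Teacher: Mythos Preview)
Your reduction of Theorem \ref{Cauchy-Pompeiu} to the case $n=1$ of Theorem \ref{Bochner-Martinelli-theorem} is exactly what the paper does; the identifications $K_1=E$, $d\Sigma_{y_1}=dy[1]$, $K(x,y)\ast f(y)=E(y-x)(\nu(y)f(y))dS(y)$ are correct, and the paper records nothing further for the statement itself.

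Where you diverge is in the proof of Theorem \ref{Bochner-Martinelli-theorem}. The paper does not establish the pointwise identity $d_y(K(x,y)\ast f(y))=\sum_j K_j(y-x)(\overline{\partial}_j f(y))\,dV(y)$ directly. Instead it proves a Stokes-type formula (Lemma \ref{Stokes-A}) carrying an explicit associator correction $-\sum_s[v_s,\overline{\partial}_j\phi_s,f]$, obtained by decomposing $\phi=K_j$ into real components $\phi_s$; it then invokes a separate lemma (Lemma \ref{Cauchy-formula-lemma}) exploiting the symmetry $\partial_{j,s}\phi_t=\partial_{j,t}\phi_s$ of the Cauchy-type kernel to kill that associator, and a further lemma (Lemma \ref{BM-sum-0}) for $\sum_j K_j\overline{\partial}_j=0$. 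Your route packages all three into one computation: you differentiate $K_j$ explicitly and observe that the only reassociations required are $v_s^{c}(v_sf)=(v_s^{c}v_s)f$ and $z_j^{c}(z_jf)=(z_j^{c}z_j)f$, each covered by Proposition \ref{artin}. This is cleaner for the specific kernel at hand and makes transparent exactly where alternativity is consumed; the paper's version, on the other hand, isolates a Stokes identity (Lemma \ref{Stokes-A}) valid for arbitrary $M$-valued $\phi$, which may be of independent use. The excision argument and the evaluation on $S(x,\varepsilon)$ via $\sum_j\nu_j^{c}(\nu_j f)=f$ are then identical in both treatments.
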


As a special case of Theorem \ref{Bochner-Martinelli-theorem}, we obtain an integral   representation formula for monogenic functions.  In  the quaternionic setting, the first corresponding result   was obtained by Pertici \cite{Pertici-88}  (see also \cite[Theorem 3.3.3]{Colombo-Sabadini-Sommen-Struppa-04}).
 \begin{corollary} \label{Bochner-Martinelli-coro}
Let    $\Omega$ be a  bounded  domain in $M^{n}$ with piecewise smooth boundary $\Gamma$. If $f\in \mathcal{M}( \overline{\Omega}, \mathbb{A})$, then
\begin{eqnarray*}
 \int_{\Gamma}  K(x,y)\ast f(y)  =
\left\{
\begin{array}{ll}
f(x), \quad  x \in \Omega,
\\
0, \quad \quad x \in \Omega^{-}.
\end{array}
\right.
\end{eqnarray*}
\end{corollary}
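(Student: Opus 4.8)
The plan is to obtain the statement as an immediate specialization of Theorem \ref{Bochner-Martinelli-theorem}. Let $f\in M(\Omega,\mathbb A)\cap C(\overline{\Omega},\mathbb A)$, so that in particular $f\in C^1(\Omega,\mathbb A)\cap C(\overline{\Omega},\mathbb A)$ (indeed $f\in C^\infty(\Omega,\mathbb A)$ by Remark \ref{harmonic}), and the hypotheses of Theorem \ref{Bochner-Martinelli-theorem} are met. By left monogenicity (Definition \ref{monogenic-Clifford}), we have $\overline{\partial}_j f(y)=0$ for every $j\in\{1,\ldots,n\}$ and every $y\in\Omega$; consequently each integrand $K_j(y-x)(\overline{\partial}_j f(y))$ vanishes identically on $\Omega$, and so does the corresponding volume integral $\int_{\Omega}K_j(y-x)(\overline{\partial}_j f(y))\,dV(y)$. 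Substituting this into formula (\ref{BM-theorem}) eliminates the sum of volume terms and leaves exactly
\begin{eqnarray*}
\int_{\Gamma} K(x,y)\ast f(y) =
\left\{
\begin{array}{ll}
f(x), & x\in\Omega,
\\
0, & x\in\Omega^{-},
\end{array}
\right.
\end{eqnarray*}
which is the assertion.

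The one point that warrants a line of care is the legitimacy of dropping the volume integrals: the vanishing of $\overline{\partial}_j f$ holds on the open set $\Omega$, and since $K_j(y-x)$ is locally integrable there (its only singularity, of order $(m+1)n-1$, sits at $y=x$ and is integrable against the Lebesgue volume element in $\mathbb R^{(m+1)n}$), the product is an integrable function that is zero almost everywhere, hence its integral is zero. This requires nothing beyond what is already packaged inside Theorem \ref{Bochner-Martinelli-theorem}, whose statement presupposes that these integrals make sense for arbitrary $f\in C^1(\Omega,\mathbb A)\cap C(\overline{\Omega},\mathbb A)$. So there is no genuine obstacle here; the only thing to verify is that ``left monogenic'' in the sense of Definition \ref{monogenic-Clifford} is precisely the condition $\overline{\partial}_j f=0$ for all $j$, which is how $M(\Omega,\mathbb A)$ was defined, so the reduction is exact.

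If one prefers a self-contained argument rather than a citation of Theorem \ref{Bochner-Martinelli-theorem}, the alternative is to rerun that theorem's proof under the extra hypothesis $\overline{\partial} f=0$: one integrates the closed $((m+1)n-1)$-form $K(x,y)\ast f(y)$ over $\Gamma=\partial\Omega$ (for $x\in\Omega^-$, applying Stokes' theorem directly since the form is smooth and closed on a neighborhood of $\overline{\Omega}$, using componentwise the integral properties in Propositions \ref{absolute-domain} and \ref{absolute-boundary}), and for $x\in\Omega$ one excises a small ball $B(x,\varepsilon)$, applies Stokes on $\Omega\setminus\overline{B(x,\varepsilon)}$ where the form is closed, and evaluates the limit of the boundary contribution over $\partial B(x,\varepsilon)$ as $\varepsilon\to 0$; the key computation is that $K_j(y-x)$ restricted to the sphere $|y-x|=\varepsilon$ integrates, against the surface measure weighted by the outward normal, to the reproducing value $f(x)$, which uses $n(x)=|x|^2$ and the normalization by $\sigma_{(m+1)n}$. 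But the cleanest route is simply to invoke Theorem \ref{Bochner-Martinelli-theorem} and set the $\overline{\partial}_j f$ terms to zero.
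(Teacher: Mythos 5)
Your proposal is correct and coincides with the paper's treatment: Corollary \ref{Bochner-Martinelli-coro} is obtained there precisely as a special case of Theorem \ref{Bochner-Martinelli-theorem}, the volume terms dropping out because $\overline{\partial}_j f=0$ for all $j$ when $f\in\mathcal{M}(\Omega,\mathbb{A})$. The extra remarks on local integrability of $K_j$ and the alternative self-contained Stokes argument are fine but not needed beyond the direct specialization.
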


To prove Theorem \ref{Bochner-Martinelli-theorem}, we need some  technical lemmas. The first lemma is from  \cite[Lemma 4.3]{Xu-Sabadini-25-O}, where a more general version was proved; see  also \cite[Lemma 3.3]{HRX} and \cite[Theorem 1]{Li-Peng-02}. For completeness,  following  the proof as in \cite[Lemma 4.3]{Xu-Sabadini-25-O}  we give the details here.
 \begin{lemma}\label{Cauchy-formula-lemma}
Let   $\Omega $ be a   domain in $M^{n}$ and $j \in \{1,\ldots,n\}$. If $\phi =\sum_{s=0}^m \phi_s  v_s \in C^1(\Omega, M) $  satisfies
\begin{equation}\label{i-j}
\partial_{j,s}\phi_t= \partial_{j,t}\phi_s, \quad 1\leq s\leq m,   1\leq  t\leq m-1,
\end{equation}
 then  for any $a\in \mathbb{A}$
$$\sum_{s=1}^{m}[v_s, \overline{\partial}_{j} \phi_s, a]=0.$$
\end{lemma}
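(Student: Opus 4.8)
The plan is to reduce the statement to a purely algebraic identity among the associators $[v_s,v_r,a]$ and then use the hypothesis \eqref{i-j} to cancel the surviving terms in pairs. First I would expand $\overline{\partial}_j\phi_s=\sum_{r=0}^m(\partial_{j,r}\phi_s)\,v_r$, which is legitimate since each $\phi_s$ is real-valued, and use that the associator $[\cdot,\cdot,\cdot]$ is $\mathbb{R}$-trilinear — additivity in each slot is just distributivity, while the fact that real scalars may be pulled out is Proposition \ref{real}. This gives
$$\sum_{s=1}^m[v_s,\overline{\partial}_j\phi_s,a]=\sum_{s=1}^m\sum_{r=0}^m(\partial_{j,r}\phi_s)\,[v_s,v_r,a].$$

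Next I would discard the terms that vanish for trivial reasons: the $r=0$ terms vanish because $[v_s,1,a]=v_sa-v_sa=0$, and the $r=s$ terms vanish because $v_s\in M$, so $[v_s,v_s,a]=0$ by Proposition \ref{artin}. What remains is $\sum_{1\le s\neq r\le m}(\partial_{j,r}\phi_s)[v_s,v_r,a]$, which I would organize as a sum over unordered pairs $\{s,r\}$ of distinct indices in $\{1,\ldots,m\}$. The key algebraic input is that $[v_s,v_r,a]=-[v_r,v_s,a]$; this follows by applying Proposition \ref{artin} to $v_s+v_r\in M$, expanding $[v_s+v_r,v_s+v_r,a]=0$ by bilinearity, and cancelling the two diagonal associators, which are again zero. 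The key analytic input is that \eqref{i-j} yields $\partial_{j,r}\phi_s=\partial_{j,s}\phi_r$ for \emph{every} pair of distinct $s,r\in\{1,\ldots,m\}$: for such a pair at least one index is $\le m-1$, so \eqref{i-j} applies after the obvious relabelling. Combining these, the contribution of the pair $\{s,r\}$ is
$$(\partial_{j,r}\phi_s)[v_s,v_r,a]+(\partial_{j,s}\phi_r)[v_r,v_s,a]=(\partial_{j,r}\phi_s-\partial_{j,s}\phi_r)\,[v_s,v_r,a]=0,$$
and summing over all pairs gives $\sum_{s=1}^m[v_s,\overline{\partial}_j\phi_s,a]=0$, as claimed.

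The argument is essentially routine once the reductions are set up; there is no serious obstacle, only two points that need a little care. The first is the book-keeping forced by the slightly asymmetric index range in \eqref{i-j} — one must check that the symmetry $\partial_{j,r}\phi_s=\partial_{j,s}\phi_r$ is in fact available for all distinct $s,r$ up to $m$ (it is, precisely because two distinct indices cannot both equal $m$). The second is the observation that $v_s+v_r$ again lies in the hypercomplex subspace $M$, which is exactly what legitimizes invoking Proposition \ref{artin} to obtain skew-symmetry of the associator in its first two arguments. No structure of $\mathbb{A}$ beyond alternativity (Artin's theorem) is used, and $a$ may be taken to be an arbitrary element of $\mathbb{A}$ throughout.
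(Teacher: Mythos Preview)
Your proof is correct and follows essentially the same approach as the paper's: expand $\overline{\partial}_j\phi_s$, discard the $r=0$ and $r=s$ terms, and cancel the remaining terms in pairs using the skew-symmetry $[v_s,v_r,a]=-[v_r,v_s,a]$ together with the symmetry of derivatives coming from \eqref{i-j}. The only difference is organizational: the paper separates off the index $m$ and treats the square $1\le s,t\le m-1$ first before handling the cross terms with $m$, whereas you handle all unordered pairs uniformly after observing that \eqref{i-j} already covers every pair of distinct indices in $\{1,\ldots,m\}$; your version is slightly cleaner, and your explicit justification of the skew-symmetry via $v_s+v_r\in M$ and Proposition~\ref{artin} is a welcome addition that the paper leaves implicit.
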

\begin{proof}
By definition, we have for  all $a \in \mathbb{A}$
$$\sum_{s=1}^{m}[v_s, \overline{\partial}_{j} \phi_s, a]
= \sum_{s =1}^{m} \sum_{ t=1}^{m-1} [v_s, v_t, a] \partial_{j,t} \phi_{s} +\sum_{s=1}^{m-1}[v_s, v_m, a]\partial_{j,m} \phi_{s}.
 $$
In view of (\ref{i-j}), it holds that
$$\sum_{1\leq s,t\leq m-1}   [v_s,v_t, a]\partial_{j,t} \phi_s =0.$$
Hence,
\begin{eqnarray*}
\sum_{s=1}^{m}[v_s, \overline{\partial}_{j} \phi_s, a]
&=&  \sum_{ t=1}^{m-1} [v_m, v_t, a] \partial_{j,t} \phi_{m} +\sum_{s=1}^{m-1}[v_s, v_m, a]\partial_{j,m} \phi_{s}
\\
&=&    \sum_{ s=1}^{m-1} [v_m, v_s, a] \partial_{j,s} \phi_{m} +\sum_{s=1}^{m-1}[v_s, v_m, a]\partial_{j,m} \phi_{s}
\\
 &=&\sum_{ s=1}^{m-1} [v_m, v_s, a]( \partial_{j,s} \phi_{m}-\partial_{j,m} \phi_{s}),\end{eqnarray*}
which vanishes from  (\ref{i-j}).  The proof is complete.
\end{proof}


\begin{lemma}\label{BM-sum-0}
Given $x\in M^{n}$, we have for all $y\in M^{n}\setminus\{x\}$
$$\sum_{j=1}^{n} \overline{\partial}_{j}K_{j}(y-x) =\sum_{j=1}^{n}K_{j}(y-x)\overline{\partial}_{j}=0.$$
\end{lemma}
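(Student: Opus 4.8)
The statement is a "global" differential identity for the Bochner–Martinelli kernel, so I would reduce it to the $n=1$ case (the Cauchy kernel) plus a bookkeeping argument about the extra variables. Concretely, fix $x\in M^n$ and write $z=y-x$; by translation invariance it suffices to show $\sum_{j=1}^n \overline{\partial}_j K_j(z)=0$ and $\sum_{j=1}^n K_j(z)\overline{\partial}_j=0$ for $z\in M^n\setminus\{0\}$, where $K_j(z)=\frac{1}{\sigma_{(m+1)n}}\,z_j^c\,|z|^{-(m+1)n}$. The first move is to compute $\overline{\partial}_j\bigl(z_j^c|z|^{-N}\bigr)$ with $N=(m+1)n$. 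Using $\overline{\partial}_j=\sum_{s=0}^m v_s\partial_{z_{j,s}}$, the product rule gives two pieces: $(\overline{\partial}_j z_j^c)\,|z|^{-N}$ and a term from differentiating $|z|^{-N}$.

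For the first piece one checks $\overline{\partial}_j z_j^c = \sum_{s=0}^m v_s\,\partial_{z_{j,s}}\!\Bigl(\sum_{t=0}^m v_t^c z_{j,t}\Bigr) = \sum_{s=0}^m v_s v_s^c = \sum_{s=0}^m n(v_s) = m+1$, since $v_0=1$ and $n(v_s)=1$ for $s\ge 1$; this is the familiar fact that $\overline{\partial}\,\bar z=m+1$ in one hypercomplex variable. For the second piece, $\partial_{z_{j,s}}|z|^{-N}=-N z_{j,s}|z|^{-N-2}$ (here $|z|^2=\sum_{i=1}^n|z_i|^2$ couples all variables), so this contributes $-N\bigl(\sum_{s=0}^m v_s z_{j,s}\bigr)z_j^c|z|^{-N-2} = -N z_j z_j^c|z|^{-N-2} = -N|z_j|^2|z|^{-N-2}$, using $z_j z_j^c=n(z_j)=|z_j|^2\in\mathbb R$ and Proposition~\ref{real}/\ref{artin} to move the scalar freely. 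Summing over $j$: $\sum_j\overline{\partial}_j K_j(z)=\frac{1}{\sigma_N}\Bigl[(m+1)n\,|z|^{-N}-N|z|^{-N-2}\sum_j|z_j|^2\Bigr]=\frac{1}{\sigma_N}\bigl[N|z|^{-N}-N|z|^{-N}\bigr]=0$, since $N=(m+1)n$ and $\sum_j|z_j|^2=|z|^2$. The right-monogenic identity is entirely symmetric: one computes $z_j^c\,\overline{\partial}_j$ acting on the right, getting $\sum_s z_j^{c}$-derivative contributions that again sum to the same cancellation, now using $\sum_s v_s^c v_s=m+1$ and $z_j^c z_j=n(z_j^c)=|z_j|^2$.

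**Where the associativity subtlety hides.** The one genuinely non-routine point is that $\mathbb A$ need not be associative, so the manipulation $v_s\bigl(v_t^c\,\partial_{z_{j,s}}z_{j,t}\bigr)$ versus $(v_s v_t^c)z_{j,t}$ and, more seriously, the factorization $\bigl(\sum_s v_s z_{j,s}\bigr)\,z_j^c = z_j z_j^c$ inside the derivative-of-$|z|^{-N}$ term must be justified. Here is exactly where Proposition~\ref{artin} enters: since $z_j\in M$, the subalgebra generated by $z_j$ and $z_j^c$ is associative (Artin), so $z_j z_j^c=|z_j|^2$ genuinely lands in $\mathbb R$ and Proposition~\ref{real} then lets it commute and associate with everything. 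One must also be a little careful that $\overline{\partial}_j$ only differentiates the $j$-th block of coordinates while $|z|^{-N}$ depends on all blocks — but $\partial_{z_{j,s}}$ sees only $z_{j,s}$ inside $|z|^2$, so no cross terms arise in the differentiation itself; the coupling only shows up through the $|z|^2=\sum_i|z_i|^2$ in the denominator, which is precisely what makes the sum over $j$ telescope to zero. Alternatively, one could invoke Lemma~\ref{Cauchy-formula-lemma} with $\phi=K_j$ (which has curl-free real components away from the origin) to handle associator terms uniformly, but the direct computation above already avoids leftover associators because every intermediate product involves $z_j$, $z_j^c$, or a scalar. I expect the main obstacle to be purely expository: stating the Artin-theorem reductions cleanly enough that the scalar-valued intermediate quantities are manifestly real before they are moved around.
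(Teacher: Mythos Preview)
Your proposal is correct and follows essentially the same route as the paper: compute $\overline{\partial}_j K_j(y-x)$ for each $j$ by the product rule, obtain the real-valued expression $\frac{m+1}{\sigma_{(m+1)n}}\bigl(|y-x|^{-(m+1)n}-n|y_j-x_j|^2|y-x|^{-(m+1)n-2}\bigr)$, and observe that the sum over $j$ telescopes to zero. The paper compresses the calculation into the phrase ``direct calculations give'' and simply records the per-$j$ formula; you have supplied the missing details, including the verification $\overline{\partial}_j z_j^c=m+1$ and the justification via Propositions~\ref{real} and~\ref{artin} that the intermediate products $z_j z_j^c=|z_j|^2$ are genuinely real and hence associate freely.
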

\begin{proof}
Given $x\in M^{n}$, direct calculations give that for all $y\in M^{n}\setminus\{x\}$ and $j\in \{1,\ldots,n\}$
\begin{eqnarray*}
&&\sigma_{(m+1)n} \overline{\partial}_{j}K_{j}(y-x)  \\
&=& \sum_{s,t=0}^{m} v_s v_t^{c} \Big( \frac{\delta_{s,t}}{|y-x|^{(m+1)n}} -(m+1)n \frac{(y_{j,t}-x_{j,t})(y_{j,s}-x_{j,s}) }{|y-x|^{(m+1)n+2} } \Big)\\
 &=&\sum_{s}^{m} v_s v_s^{c}  \frac{1}{|y-x|^{(m+1)n}} - (m+1)n \sum_{s,t=0}^{m}  \frac{ (y_{j,s}-x_{j,s})v_s (y_{j,t}-x_{j,t})v_t^{c}}{|y-x|^{(m+1)n+2} }
 \\
 &=&   \frac{m+1}{|y-x|^{(m+1)n}} - (m+1)n \sum_{s,t=0}^{m}  \frac{  (y_{j}-x_{j}) (y_{j}-x_{j})^{c}}{|y-x|^{(m+1)n+2} }.
 \end{eqnarray*}
Hence,
 $$\overline{\partial}_{j}K_{j}(y-x) =\frac{ m+1 }{\sigma_{(m+1)n}}
\Big( \frac{1}{|y-x|^{(m+1)n}}- \frac{n|y_{j}-x_{j}|^{2} }{|y-x|^{(m+1)n+2}} \Big). $$
Taking   summation over $j$ would  conclude  that $\sum_{j=1}^{n} \overline{\partial}_{j}K_{j}(y-x) =0$.  The fact $\sum_{j=1}^{n} K_{j}(y-x) \overline{\partial}_{j}=0$ can be obtained by  similar calculations.  The proof is complete.
\end{proof}
\begin{remark}\label{kernel-harmonic-remark}
From the proof of Lemma \ref{BM-sum-0},  the Bochner-Martinelli  kernel given by (\ref{BMK}) is not left (or right) monogenic unless $n=1$, but still harmonic with respect to $x$, which  follows from the fact
$$K_{j}(x)= \partial_{j} G(x),\quad j=1,\ldots,n,$$
where $\partial_{j} =\sum _{s =0}^{m}  v_{s}^{c}   \partial _{x_{j,s}}$ is the conjugated  Dirac operator corresponding to the $j$-th copy of $M$ and $G$ is the fundamental solution for the Laplacian in $\mathbb{R}^{(m+1)n}$ given by
\begin{eqnarray*}
G(x) =
\left\{
\begin{array}{ll}
- \frac{1}{2\pi} \log |x |, \quad \quad \quad \quad \quad \quad \quad \quad \quad  m=n=1,
\\
\frac{1}{(-(m+1)n+2)\sigma_{(m+1)n}}   \frac{1}{|x|^{(m+1)n-2}}, \quad (m+1)n>2.
\end{array}
\right.
\end{eqnarray*}
\end{remark}

\begin{lemma}\label{Stokes-A}
Let     $\Omega$ be a bounded domain in $M^{n}$ with piecewise smooth boundary $\Gamma$. If  $ \phi  \in C^1( \overline{\Omega},  M)$ and $f\in C^1( \overline{\Omega},\mathbb{A})$, then for $j= 1,\ldots,n,$
 $$\int_{\Gamma}   \phi    (\nu_{j} f)dS= \int_{\Omega} \Big((\phi \overline{\partial}_{j}  )f + \phi (\overline{\partial}_{j} f)
  - \sum_{s=0}^{m} [v_s, \overline{\partial}_{j} \phi_s, f]\Big)dV,
$$
 where    $dV$ stands  for  the   classical   Lebesgue   volume element  in $\mathbb{R}^{(m+1)n}$.
\end{lemma}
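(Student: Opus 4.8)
The plan is to reduce the identity to the classical Gauss--Green (divergence) theorem applied to each real component, while carefully tracking the failure of associativity of $\mathbb{A}$; this failure will be collected exactly into the associator term $\sum_{s=0}^{m}[v_s,\overline{\partial}_{j}\phi_s,f]$.

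First I would expand the $j$-th component of the exterior unit normal in the fixed basis, writing $\nu_j=\sum_{s=0}^{m}\nu_{j,s}v_s$ with real-valued $\nu_{j,s}$. Since real scalars are central and associate with every element (Proposition \ref{real}), the boundary integrand becomes $\phi(\nu_j f)=\sum_{s=0}^{m}\nu_{j,s}\,\big(\phi(v_s f)\big)$, hence
$$\int_{\Gamma}\phi(\nu_j f)\,dS=\sum_{s=0}^{m}\int_{\Gamma}\nu_{j,s}\,\big(\phi(v_s f)\big)\,dS .$$
For each $s$ the function $\phi(v_s f)$ lies in $C^1(\Omega,\mathbb{A})\cap C(\overline{\Omega},\mathbb{A})$, since multiplication in $\mathbb{A}$ is $\mathbb{R}$-bilinear, hence smooth on components; applying the classical divergence theorem to each real component of $\phi(v_s f)$ in the coordinate direction $x_{j,s}$ (and using Definitions \ref{integral}--\ref{integral-surface}) gives
$$\int_{\Gamma}\nu_{j,s}\,\big(\phi(v_s f)\big)\,dS=\int_{\Omega}\partial_{x_{j,s}}\big(\phi(v_s f)\big)\,dV .$$
As the Leibniz rule holds for any bilinear product, $\partial_{x_{j,s}}\big(\phi(v_s f)\big)=(\partial_{x_{j,s}}\phi)(v_s f)+\phi\big(v_s\,\partial_{x_{j,s}}f\big)$, so summing over $s$ splits the right-hand side into two pieces.

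The second piece is immediate from linearity: $\sum_{s=0}^{m}\phi\big(v_s\,\partial_{x_{j,s}}f\big)=\phi\big(\sum_{s=0}^{m}v_s\,\partial_{x_{j,s}}f\big)=\phi(\overline{\partial}_{j}f)$. For the first piece I would insert the associator, $(\partial_{x_{j,s}}\phi)(v_s f)=\big((\partial_{x_{j,s}}\phi)v_s\big)f-[\partial_{x_{j,s}}\phi,v_s,f]$; summing the leading terms and invoking the definition of the right Dirac operator yields $\sum_{s=0}^{m}\big((\partial_{x_{j,s}}\phi)v_s\big)f=(\phi\overline{\partial}_{j})f$. For the associator terms, expanding $\partial_{x_{j,s}}\phi=\sum_{t=0}^{m}(\partial_{x_{j,s}}\phi_t)v_t$, pulling the real coefficients out of the $\mathbb{R}$-trilinear associator, relabelling the summation indices $s\leftrightarrow t$, and reassembling $\overline{\partial}_{j}\phi_s=\sum_{t=0}^{m}v_t\,\partial_{x_{j,t}}\phi_s$, one obtains $\sum_{s=0}^{m}[\partial_{x_{j,s}}\phi,v_s,f]=\sum_{s=0}^{m}[v_s,\overline{\partial}_{j}\phi_s,f]$. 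Collecting the three contributions produces the asserted formula.

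There is no genuine obstacle; the only point demanding care is the bookkeeping for non-associativity --- one must never reparenthesize a product of non-real elements without recording the associator --- so the substantive work is the cosmetic relabelling identifying the associator sum with $\sum_{s=0}^{m}[v_s,\overline{\partial}_{j}\phi_s,f]$, which uses only Proposition \ref{real} and $\mathbb{R}$-trilinearity of the associator. One also tacitly assumes enough integrability of $\partial_{x_{j,s}}(\phi(v_s f))$ on $\Omega$ for the divergence theorem to apply; alternatively one proves the identity first on a smooth exhaustion of $\Omega$ and then passes to the limit.
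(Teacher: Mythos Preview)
Your proof is correct and follows essentially the same approach as the paper: reduce to the classical divergence theorem applied to real components, then track the failure of associativity via the associator. The only cosmetic difference is the order of decomposition---the paper expands $\phi$, $f$, and $\nu_j$ all into real components before applying Stokes and then rebuilds by left-multiplying by $v_s$ last (so the associator term $\sum_s[v_s,\overline{\partial}_j\phi_s,f]$ appears directly), whereas you expand only $\nu_j$, keep $\phi$ and $f$ $\mathbb{A}$-valued, and then need the $s\leftrightarrow t$ relabelling to bring the associator into the stated form; both routes are equally valid.
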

\begin{proof}
Let $\phi=\sum_{s=0}^{m}\phi_sv_s \in C^1( \overline{\Omega},  M)$, $f=\sum_{t=0}^{d}f_tv_t \in C^1( \overline{\Omega},  \mathbb{A})$  and $\nu=(\nu_1, \ldots, \nu_n)$ be the unit exterior normal to $\partial \Omega$ with $\nu_{j}=\sum_{k=0}^{m} v_k\nu_{j,k}$ for $j=1,\ldots,n$.
By the Stokes formula, it holds that for all real-valued functions $\phi_{s}, f_t\in C^1( \overline{\Omega})$
$$\int_{\Gamma}   \phi_{s}f_t  \nu_{j,k} dS
=\int_{\Omega}(   (\partial_{j,k}\phi_{s} )f_t + \phi_{s} (\partial_{j,k}f_t) ) dV,\quad k=0,1,\ldots,m. $$
By multiplying by $v_k, k=0,1,\ldots,m,$  on both sides of the  formula above, respectively, and  taking summation over $k$, we get
$$\int_{\Gamma}   \phi_{s}f_t   \nu_{j} dS =
\int_{ \Omega} ( (\overline{\partial}_{j}  \phi_{s} )f_t+ \phi_{s} (\overline{\partial}_{j}  f_t) )dV.$$
Then,  by multiplying by $v_t, t=0,1,\ldots,d,$ from the right of both sides above  and taking   summation over $t$,
$$\int_{\Gamma}   \phi_s    \nu_{j} fdS =
\int_{\Omega}  ( (  \overline{\partial}_{j}\phi_s )f + \phi_s  (\overline{\partial}_{j}  f) )dV,$$
By multiplying by  $v_s, s=0,1,\ldots,m,$  from the   left of both sides above,  and then taking summation over $s$, there holds
\begin{eqnarray*}
  \int_{\Gamma}   \phi (    \nu_{j} f)dS
&=& \int_{\Omega}  (\sum_{s=0}^{m}  v_s ((  \overline{\partial}_{j}\phi_s )f) + \phi  (\overline{\partial}_{j}  f) )dV
\\
&=&\int_{ \Omega}\Big( (\phi \overline{\partial}_{j}  )f  + \phi (\overline{\partial}_{j} f) -
\sum_{s=0}^{m} [ v_s,  \overline{\partial}_{j} \phi_s ,f]\Big)dV
\\
&=&\int_{ \Omega}\Big( (\phi \overline{\partial}_{j}  )f  + \phi (\overline{\partial}_{j} f) -
\sum_{s=1}^{m} [ v_s,  \overline{\partial}_{j} \phi_s ,f]\Big)dV,
 \end{eqnarray*}
which completes the proof.
 \end{proof}

Denote by $B(x,\epsilon):=\{y \in M^{n}: | y- x|<\epsilon\}$  the ball centered at $x$ with radius $\epsilon>0$  and by $S(x,\epsilon):=\partial B(x,\epsilon)$  its boundary. Now we come to the proof of the Bochner-Martinelli theorem in several hypercomplex variables over real alternative $\ast$-algebras as follows.

\begin{proof}[Proof of Theorem \ref{Bochner-Martinelli-theorem}]
Fix $ x \in M^{n}$ and consider  $B(x,\epsilon)$. Set $\phi_{j}( y)=K_{j}( y-x)=\sum_{s=0}^m  \phi_{j,s}(y)v_s$ for $j=1,\ldots,n$. Observe that we have  by Lemma \ref{BM-sum-0}
\begin{equation}\label{sum-zero}
 \sum_{j=1}^{n} \phi_j( y) \overline{\partial}_{j} =0, \quad    y\in M^{n} \setminus \{x\}, \end{equation}
and each $\phi_{j} $ satisfies conditions in (\ref{i-j}), which gives that by Lemma \ref{Cauchy-formula-lemma}, for all $a\in \mathbb{A}$,
$$\sum_{s=1}^{m}[v_s, \overline{\partial}_{j}  \phi_{j,s}(y), a]=0, \quad    y\in M^{n} \setminus \{x\}.$$

When $x \in \Omega$,   for $\epsilon$ small enough, we have by  Lemmas \ref{Stokes-A} for $f \in C^1( \overline{\Omega},  \mathbb{A})$
 \begin{eqnarray}\label{Stokes-ball}
&& \int_{\Gamma} \phi_{j}    ( \nu_{j} f)dS - \int_{S( x,\epsilon)}   \phi_{j}    (\nu_{j} f)dS
\notag
\\
&=& \int_{ \Omega \setminus B( x,\epsilon)}\Big( (\phi_j \overline{\partial}_{j}  )f + \phi_j (\overline{\partial}_{j}f) -
\sum_{s=1}^{m} [v_s,  \overline{\partial}_{j} \phi_{j,s}, f]\Big)dV \notag
\\
&=&\int_{ \Omega \setminus B( x,\epsilon)}\big( (\phi_j \overline{\partial}_{j}  )f + \phi_j (\overline{\partial}_{j}f)\big)dV.
 \end{eqnarray}
Notice that the unit exterior normal to $S( x,\epsilon)$ at $y$ is given by $\nu(y)=(\nu_1(y), \ldots, \nu_n(y))$ with  $\nu_j(y)= \frac{ y_{j}- x_{j}}{| y- x|}$ for $j=1,\ldots,n$. Hence, recalling Proposition \ref{artin}, it follows that
 \begin{eqnarray*}
 \int_{S( x,\epsilon) }   \phi_j    (\nu_{j} f)dS
&=&  \frac{1}{\sigma_{(m+1)n}}\int_{S( x,\epsilon) }  \frac{  (y_{j}- x_{j})^{c}}{| y- x|^{(m+1)n}}   \Big(\frac{ y_{j}- x_{j}}{| y- x|} f( y)\Big)dS( y)
\\
&=& \frac{1}{\sigma_{(m+1)n}}\int_{S( x,\epsilon) } \Big( \frac{(y_j- x_j)^{c}}{| y- x|^{(m+1)n}}   \frac{ y_j- x_j}{| y- x|}\Big) f( y)dS( y)
\\
&=& \frac{1}{\epsilon^{(m+1)n+1}\sigma_{(m+1)n}}\int_{S( x,\epsilon) } |y_j- x_j|^{2}  f( y)dS( y).
\end{eqnarray*}
Inserting  this result into (\ref{Stokes-ball}), we have  for $j=1,\ldots,n,$
 \begin{eqnarray*}
&& \int_{\Gamma} \phi_{j}    ( \nu_{j} f)dS - \frac{1}{\epsilon^{(m+1)n+1}\sigma_{(m+1)n}}\int_{S( x,\epsilon) } |y_j- x_j|^{2}  f( y)dS( y)\\
&=& \int_{ \Omega \setminus B( x,\epsilon)}\big( (\phi_j \overline{\partial}_{j}  )f + \phi_j (\overline{\partial}_{j}f) \big)dV.
\end{eqnarray*}
Then, taking summation over $j$ and by (\ref{sum-zero}), we have
 \begin{eqnarray*}
&& \int_{\Gamma} \sum_{j=1}^{n}  \phi_{j}    ( \nu_{j} f)dS - \frac{1}{\epsilon^{(m+1)n+1}\sigma_{(m+1)n}}\int_{S( x,\epsilon) } \sum_{j=1}^{n} |y_j- x_j|^{2}  f( y)dS( y)\\
&=& \int_{ \Omega \setminus B( x,\epsilon)}\sum_{j=1}^{n}  \big( (\phi_j \overline{\partial}_{j}  )f + \phi_j (\overline{\partial}_{j}f) \big)dV
\\
&=& \int_{ \Omega \setminus B( x,\epsilon)}\sum_{j=1}^{n}   \phi_j (\overline{\partial}_{j}f)  dV,
\end{eqnarray*}
i.e.
\begin{equation}\label{aa}
\frac{1}{\epsilon^{(m+1)n-1}\sigma_{(m+1)n}}\int_{S( x,\epsilon) } f( y)dS( y)
= \int_{\Gamma } \sum_{j=1}^{n}  \phi_{j}    ( \nu_{j} f)dS -
 \int_{ \Omega \setminus B( x,\epsilon)}\sum_{j=1}^{n}   \phi_j (\overline{\partial}_{j}f)  dV.\end{equation}
Notice that for $f\in C^1(\overline{\Omega}, \mathbb{A})$ and  fixed $x\in \Omega$
$$\lim_{\epsilon\rightarrow 0}  \int_{   B( x,\epsilon)}\sum_{j=1}^{n}   \phi_j (\overline{\partial}_{j}f)  dV
= 0,$$
and
$$\lim_{\epsilon\rightarrow 0}  \frac{1}{\epsilon^{(m+1)n-1}\sigma_{(m+1)n}}\int_{S( x,\epsilon) } f( y)dS( y) = f( x).$$
Finally, letting  $\epsilon\rightarrow0$ in (\ref{aa}), we conclude that
 $$ f(x)=\int_{\Gamma}  K(x,y)\ast f(y) -
\sum_{j=1}^{n}\int_{\Omega} K_{j}(y-x)  (\overline{\partial}_{j} f(y)) dV(y), \quad  x \in \Omega. $$

When $ x \in \Omega^{-}$, formula  (\ref{Stokes-ball}) should be replaced by
 \begin{eqnarray*}\label{Stokes-out}
\int_{\Gamma} \phi_{j}    ( \nu_{j} f)dS
&=& \int_{ \Omega }\Big( (\phi_j \overline{\partial}_{j}  )f + \phi_j (\overline{\partial}_{j}f) -
\sum_{s=1}^{m} [v_s,  \overline{\partial}_{j} \phi_{j,s}, f]\Big)dV \notag
\\
&=&\int_{ \Omega }\big( (\phi_j \overline{\partial}_{j}  )f + \phi_j (\overline{\partial}_{j}f)\big)dV.
 \end{eqnarray*}
Taking summation over $j$ and recalling (\ref{sum-zero}), we  get
 $$ \int_{\Gamma}  K(x,y)\ast f(y) -
\sum_{j=1}^{n}\int_{\Omega} K_{j}(y-x)  (\overline{\partial}_{j} f(y)) dV(y)=0, \quad  x \in \Omega. $$
The proof is complete.
\end{proof}

\section{Plemelj-Sokhotski formula}
In this section, we shall denote by $C$  some positive constant, which may vary in different places and is independent  of the variables such as $x,y,x'$.

Let $E$ be a set in $M^{n}$ and  $\alpha\in (0,1)$. Denote by $\Lambda^{\alpha}(E,\mathbb{A})$ all functions   $f:E\rightarrow \mathbb{A}$ which satisfy the  H\"{o}lder condition with exponent $\alpha$, i.e.
$$  |f(x)-f(x')|\leq C|x-x'|^{\alpha},\quad x,x'\in E,$$
where $C$ denotes some constant.

Let     $\Omega$ be a bounded domain in $M^{n}$ with boundary $\Gamma$.
For $x\in \Gamma$, we define the solid angle of the tangent cone to the surface $\Gamma$ at $x$ by
$$\tau(x)=\lim_{\epsilon\rightarrow 0} \frac{\mathrm{vol} \{S(x,\epsilon)\cap \Omega\} }{\mathrm{vol} \{S(x,\epsilon) \}},$$
provided it exists. When $\Gamma$ is piecewise smooth, one can  follow literally the proof of \cite[Lemma 1]{Lin-88} and show that $\tau$ is well-defined   and takes values in $(0,1)$. In particular,  $\tau(x)=1/2$ for smooth $\Gamma$.

For $f\in \Lambda^{\alpha}(\Gamma, \mathbb{A})$, we define the Bochner-Martinelli  type integral
\begin{eqnarray}\label{Bochner-matinelli-definition}
  C_{\Gamma}[f](x)=\int_{\Gamma }K(x,y) \ast f(y), \quad x\in  M^n.
  \end{eqnarray}
Since the Laplace operator is   real-scalar valued,  it is easy to prove from Remark \ref{kernel-harmonic-remark} that $C_{\Gamma}[f]$ is   harmonic both in $\Omega$  and in $\Omega^{-}$. Moreover, $C_{\Gamma}[f](x)=O(|x|^{1-n(m+1)})$ as $|x|\rightarrow \infty$ by Propositions \ref{absolute-boundary} and \ref{norm}.  When $x\in \Gamma$, the integrand in (\ref{Bochner-matinelli-definition}) has the singularity $|y-x|^{1-n(m+1)}$  and the integral  does not exist generally as an improper integral. Hence, when $x\in \Gamma$,    the singular Bochner-Martinelli integral is defined in the sense of Cauchy's principle value as
$$ C_{\Gamma}[f](x)=\mathrm{P. V.} \int_{\Gamma }K(x,y) \ast f(y)
      =\lim_{\epsilon\rightarrow 0} \int_{\Gamma \setminus B( x,\epsilon)} K(x,y) \ast f(y), \quad x\in  \Gamma.$$

\begin{lemma}\label{Bochner-Martinel-constant}
For any $a\in \mathbb{A}$,
$$C_{\Gamma}[a](x)=\tau(x)a, \quad x \in \Gamma. $$
\end{lemma}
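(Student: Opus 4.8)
The plan is to apply the Bochner--Martinelli formula (Theorem~\ref{Bochner-Martinelli-theorem}) to the \emph{constant} function $a$ on the excised domain $\Omega_{\epsilon}:=\Omega\setminus\overline{B(x,\epsilon)}$ and then let $\epsilon\to 0$. Fix $x\in\Gamma$. For all small $\epsilon>0$, $\Omega_{\epsilon}$ is a bounded open set with piecewise smooth boundary (and one may work on each connected component separately), and, up to a set of surface measure zero,
\[
\partial\Omega_{\epsilon}=\big(\Gamma\setminus\overline{B(x,\epsilon)}\big)\ \sqcup\ \big(S(x,\epsilon)\cap\Omega\big),\qquad x\in\Omega_{\epsilon}^{-}=M^{n}\setminus\overline{\Omega_{\epsilon}}.
\]
Since $a$ is constant, $\overline{\partial}_{j}a=0$ for all $j$, so the volume term in \eqref{BM-theorem} disappears and Theorem~\ref{Bochner-Martinelli-theorem} gives $\int_{\partial\Omega_{\epsilon}}K(x,y)\ast a=0$. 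Splitting over the two pieces of $\partial\Omega_{\epsilon}$, and noting that the outer unit normal of $\Omega_{\epsilon}$ along $S(x,\epsilon)\cap\Omega$ is the \emph{inward} normal of the ball, namely $-(y_{j}-x_{j})/\epsilon$ in the $j$-th slot, this becomes
\[
\int_{\Gamma\setminus\overline{B(x,\epsilon)}}K(x,y)\ast a\ =\ \int_{S(x,\epsilon)\cap\Omega}\sum_{j=1}^{n}K_{j}(y-x)\Big(\tfrac{y_{j}-x_{j}}{\epsilon}\,a\Big)\,dS(y).
\]

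The next step is to evaluate the spherical-cap integral explicitly. On $S(x,\epsilon)$ one has $|y-x|=\epsilon$; using that $1/|y-x|^{(m+1)n}$ is real (Proposition~\ref{real}) together with the Artin-type identity $[(y_{j}-x_{j})^{c},\,y_{j}-x_{j},\,a]=0$ from Proposition~\ref{artin} --- this is the point where non-associativity must be handled --- one reassociates, for each $j$,
\[
\frac{(y_{j}-x_{j})^{c}}{|y-x|^{(m+1)n}}\Big((y_{j}-x_{j})a\Big)=\frac{n(y_{j}-x_{j})}{|y-x|^{(m+1)n}}\,a=\frac{|y_{j}-x_{j}|^{2}}{\epsilon^{(m+1)n}}\,a .
\]
Summing over $j$ and using $\sum_{j}|y_{j}-x_{j}|^{2}=|y-x|^{2}=\epsilon^{2}$, the whole integrand collapses to the constant $\big(\sigma_{(m+1)n}\,\epsilon^{(m+1)n-1}\big)^{-1}a$, so by Proposition~\ref{absolute-boundary},
\[
\int_{S(x,\epsilon)\cap\Omega}\sum_{j=1}^{n}K_{j}(y-x)\Big(\tfrac{y_{j}-x_{j}}{\epsilon}a\Big)dS(y)=\frac{\mathrm{vol}\{S(x,\epsilon)\cap\Omega\}}{\sigma_{(m+1)n}\,\epsilon^{(m+1)n-1}}\,a=\frac{\mathrm{vol}\{S(x,\epsilon)\cap\Omega\}}{\mathrm{vol}\{S(x,\epsilon)\}}\,a .
\]

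Finally I would let $\epsilon\to 0$. The integral $\int_{\Gamma\setminus\overline{B(x,\epsilon)}}K(x,y)\ast a$ differs from $\int_{\Gamma\setminus B(x,\epsilon)}K(x,y)\ast a$ only on a surface-measure-zero subset of $\Gamma$, so by the very definition of the singular Bochner--Martinelli integral it tends to $C_{\Gamma}[a](x)$; the right-hand side tends to $\tau(x)\,a$ by the definition of the solid angle $\tau(x)$. Hence $C_{\Gamma}[a](x)=\tau(x)\,a$ (this argument also shows \emph{a posteriori} that the principal value exists for constant densities). The same conclusion can alternatively be reached by repeating the proof of Theorem~\ref{Bochner-Martinelli-theorem} verbatim with $x\in\Gamma$ in place of $x\in\Omega$, i.e.\ applying Lemmas~\ref{Cauchy-formula-lemma} and~\ref{Stokes-A} on $\Omega\setminus B(x,\epsilon)$. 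I expect the only delicate points to be bookkeeping: getting the orientation of the spherical cap right (this is what produces the correct constant $\tau(x)$ rather than, say, $1-\tau(x)$), and invoking Propositions~\ref{real} and~\ref{artin} carefully so that the reassociation $(y_{j}-x_{j})^{c}\big((y_{j}-x_{j})a\big)=n(y_{j}-x_{j})\,a$ is legitimate over the possibly non-associative $\mathbb{A}$; the decomposition of $\partial\Omega_{\epsilon}$ and the limit of the $\Gamma$-integral are routine.
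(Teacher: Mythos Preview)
Your proposal is correct and follows essentially the same approach as the paper's own proof: excise a ball $B(x,\epsilon)$, apply the Bochner--Martinelli formula on $\Omega\setminus B(x,\epsilon)$ with $x$ in the exterior (the paper invokes Corollary~\ref{Bochner-Martinelli-coro} rather than Theorem~\ref{Bochner-Martinelli-theorem}, but for the constant $a$ these coincide), then evaluate the spherical-cap integral exactly via Proposition~\ref{artin} and pass to the limit using the definition of $\tau(x)$. The orientation bookkeeping and the reassociation step you flag as delicate are handled in the paper precisely as you outline.
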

\begin{proof}
By definition, it holds that
$$C_{\Gamma}[a](x)=\mathrm{P. V.} \int_{\Gamma }K(x,y)\ast a
      =\lim_{\epsilon\rightarrow 0} \int_{\Gamma \setminus B( x,\epsilon)} K(x,y)\ast a, \quad x\in  \Gamma.$$
Note that
$$   \Gamma \setminus B( x,\epsilon)=\partial ( \Omega \setminus B( x,\epsilon)) \cup S^{+}( x,\epsilon),  $$
where $S^{+}( x,\epsilon)= \Omega  \cap  S( x,\epsilon) $ is the part of the sphere  $S( x,\epsilon)$ lying in  $\Omega$.
Due to that any  constant $a\in \mathbb{A}$ is  in $\mathcal {M}(\Omega \setminus B( x,\epsilon), \mathbb{A})  \cap C(\overline{\Omega \setminus B( x,\epsilon)}, \mathbb{A})$,  we have by Corollary  \ref{Bochner-Martinelli-coro}
$$\int_{\partial (\Omega \setminus B( x,\epsilon))} K(x,y)\ast a =0.$$
Hence,
 \begin{eqnarray*}
  \int_{\Gamma \setminus B( x,\epsilon)} K(x,y)\ast a
&=& \int_{\partial ( \Omega \setminus B( x,\epsilon)) } K(x,y)\ast a+\int_{S^{+}( x,\epsilon)} K(x,y)\ast a
\\
&=& \int_{S^{+}( x,\epsilon)} K(x,y)\ast a \\
&=& \frac{1}{\sigma_{(m+1)n}} \int_{S^{+}( x,\epsilon)} \sum_{j=1}^{n}   \frac{  (y_{j}- x_{j})^{c}}{| y- x|^{(m+1)n}}  \Big(  \frac{ y_{j}- x_{j}}{| y- x|} a\Big) dS( y)\\
&=& \frac{1}{\sigma_{(m+1)n}} \int_{S^{+}( x,\epsilon)} \sum_{j=1}^{n}  \Big( \frac{ (y_{j}- x_{j})^{c}}{| y- x|^{(m+1)n}}  \frac{ y_{j}- x_{j}}{| y- x|} \Big) a dS( y)\\
&=& \frac{\mathrm{vol}\{S^{+}( x,\epsilon)\}}{\sigma_{(m+1)n}\epsilon^{(m+1)n-1}} a\\
&=& \frac{\mathrm{vol}\{S^{+}( x,\epsilon)\}}{\mathrm{vol}\{ S( x,\epsilon)\}} a,
\end{eqnarray*}
where the fourth equality follows from Proposition \ref{artin}.
The proof is complete.
\end{proof}

To prove our next result,  we recall a  technical result from \cite[Lemma 1]{Lin-88}.
\begin{lemma} \label{Lin-lemma}
 Let     $\Omega$ be a bounded domain in $M^{n}$ with piecewise smooth boundary $\Gamma$ and $x\in \Gamma$ be fixed. If $x'$ tends to $x$ along arbitrarily non-tangential direction from $\Omega$ (or $\Omega^{-}$),  there exist $\delta>0$ and $N\geq2$, which are independent of $x$, such that, for  $|x'-x|\leq \delta$,
 $$|x'-x|\leq N|x'-y|,\ |y-x|\leq (N+1)|x'-y|, \quad \forall \ y\in \Gamma.$$

 \end{lemma}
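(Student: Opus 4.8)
The statement is purely metric-geometric: it asserts a comparability between the distances $|x'-x|$, $|x'-y|$, and $|y-x|$ when $x'$ approaches the fixed boundary point $x$ along a non-tangential direction and $y$ ranges over $\Gamma$. Since the underlying space $M^n$ is isometric to the Euclidean space $\mathbb{R}^{(m+1)n}$ (recall Assumption~5, which identifies the norm $|\cdot|$ with the standard Euclidean norm), the claim is exactly the classical fact used in the theory of singular Bochner--Martinelli integrals in $\mathbb{R}^N$. The plan is therefore to reduce it to Euclidean geometry and then either cite \cite[Lemma 1]{Lin-88} verbatim or reproduce the short argument. Given that the authors explicitly say ``we recall a technical result from \cite[Lemma 1]{Lin-88}'', the intended ``proof'' is simply the observation that the hypotheses of that lemma are met here, since $\Gamma=\partial\Omega$ is piecewise smooth in $M^n\cong\mathbb{R}^{(m+1)n}$ and the notion of non-tangential approach is the Euclidean one.

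If one wants to sketch the underlying argument rather than merely cite it, the key steps are as follows. First, fix a non-tangential cone $\Gamma_\alpha(x)\subseteq\Omega$ (or $\subseteq\Omega^-$) with vertex $x$ and aperture bounded away from the tangent hyperplane of $\Gamma$ at $x$; by definition of non-tangential approach there is an angle $\theta_0>0$ so that the segment $[x,x']$ makes an angle at least $\theta_0$ with $T_x\Gamma$. Second, since $\Gamma$ is piecewise smooth, after possibly shrinking to a neighborhood $|y-x|\le\delta$ the surface $\Gamma$ lies in a thin Euclidean cone around $T_x\Gamma$; combining this with the previous step, the triangle with vertices $x$, $x'$, $y$ has the angle at $x'$ (or a comparable quantity) bounded below, which forces $|x'-y|\ge c\,|x'-x|$ for a constant $c=c(\theta_0)>0$ independent of $x$ (uniformity over $x$ uses compactness of $\Gamma$ together with the finitely many smooth pieces). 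Setting $N=\lceil 1/c\rceil\ge 2$ gives $|x'-x|\le N|x'-y|$. Third, the remaining inequality follows from the triangle inequality: $|y-x|\le |y-x'|+|x'-x|\le |x'-y|+N|x'-y|=(N+1)|x'-y|$.

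The only genuine subtlety — and the step I would expect to need the most care — is the \emph{uniformity of $\delta$ and $N$ in $x$} over the whole boundary $\Gamma$, including across the singular set where smooth pieces meet. This is precisely the content of \cite[Lemma 1]{Lin-88}, whose proof handles the piecewise-smooth case by a compactness/partition argument on the edges and corners; rather than reproduce that machinery, I would state the lemma with attribution and note that the isometry $M^n\cong\mathbb{R}^{(m+1)n}$ transports it verbatim to the present setting. In the write-up this amounts to: ``\emph{Proof.} Since $M^n$ is isometric to $\mathbb{R}^{(m+1)n}$ and $\Gamma$ is piecewise smooth, this is \cite[Lemma 1]{Lin-88}.'' followed by the end-of-proof marker.
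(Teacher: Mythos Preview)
Your proposal is correct and matches the paper's treatment exactly: the paper does not prove this lemma at all but simply states it with attribution to \cite[Lemma 1]{Lin-88}, relying implicitly on the identification $M^n\cong\mathbb{R}^{(m+1)n}$. Your additional sketch of the underlying cone-and-triangle-inequality argument, together with the remark on uniformity via compactness, goes beyond what the paper provides and is a welcome elaboration.
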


  As we know,  monogenic functions of  one hypercomplex variable over real alternative $\ast$-algebras  would  include  the $\psi$-hyperholomorphic functions  associated with the \textit{structural set} $\psi$ \cite[Example 2.7]{HRX}. For some   Plemelj-Sokhotski formulas  on $\psi$-hyperholomorphic functions, see e.g. \cite{Uwe} and references therein. Now we   can establish the Plemelj-Sokhotski formula for monogenic functions of  several  hypercomplex variables over real alternative $\ast$-algebras as follows.

\begin{theorem}[Plemelj-Sokhotski]\label{Plemelj-Sokhotski-theorem}
  Let  $\Omega$ be a bounded domain in $M^{n}$ with piecewise smooth boundary $\Gamma=\partial\Omega$ and let $f\in \Lambda^{\alpha}(\Gamma, \mathbb{A})$, where $0<\alpha <1$. Then the Bochner-Martinelli integral $C_{\Gamma}$ extends continuously to $\Gamma$ along arbitrarily non-tangential (n.t. for short) direction from   $\Omega$ (or  $\Omega^{-}$). Moreover,   for $x\in \Gamma$,
the following  two limits   exist
  $$C_{\Gamma}^{+}[f](x):= \mathrm{n.t.}\lim_{ \Omega \ni x' \rightarrow x} C_{\Gamma} [f](x'),$$
   $$C_{\Gamma}^{-}[f](x):=\mathrm{n.t.}\lim_{  \Omega^{-} \ni x' \rightarrow x} C_{\Gamma} [f](x'),$$
and  are given by, respectively,
$$ C_{\Gamma}^{+}[f](x)=C_{\Gamma}[f](x) + (1-\tau(x))f(x), $$
$$  C_{\Gamma}^{-}[f](x)=C_{\Gamma}[f](x) -\tau(x)f(x).$$
 \end{theorem}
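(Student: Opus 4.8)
The plan is to reduce the Plemelj--Sokhotski formula to a single auxiliary estimate on the ``regularized'' kernel integral, in the classical style of Lin \cite{Lin-88} and Kytmanov \cite{Kytmanov-95}. First I would write, for $x\in\Gamma$ and $x'\notin\Gamma$,
$$C_{\Gamma}[f](x') = \int_{\Gamma} K(x',y)\ast\bigl(f(y)-f(x)\bigr) + \int_{\Gamma} K(x',y)\ast f(x),$$
and treat the two pieces separately. The second piece is handled by Corollary \ref{Bochner-Martinelli-coro} applied to the constant function $f(x)$: it equals $f(x)$ when $x'\in\Omega$ and $0$ when $x'\in\Omega^{-}$, since the constant is monogenic and continuous up to $\overline{\Omega}$. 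The first piece I would call $G(x')$; the whole theorem comes down to showing that $G$ extends continuously from $\Omega$ (resp.\ $\Omega^{-}$) to $\Gamma$ with n.t.\ boundary value
$$\mathrm{n.t.}\lim_{x'\to x} G(x') = \int_{\Gamma} K(x,y)\ast\bigl(f(y)-f(x)\bigr) =: G(x),$$
where the integral on the right is an honest (absolutely convergent) improper integral because the H\"older bound $|f(y)-f(x)|\le C|y-x|^{\alpha}$ kills one power of the $|y-x|^{1-(m+1)n}$ singularity. Granting this, the stated formulas follow by bookkeeping: for $x'\in\Omega$, $C_{\Gamma}[f](x') = G(x') + f(x) \to G(x)+f(x)$, and one identifies $G(x) = C_{\Gamma}[f](x) - \tau(x)f(x)$ by applying the principal-value definition of $C_{\Gamma}[f](x)$ together with Lemma \ref{Bochner-Martinel-constant} (which gives $\mathrm{P.V.}\int_{\Gamma}K(x,y)\ast f(x) = \tau(x)f(x)$); hence $C_{\Gamma}^{+}[f](x) = C_{\Gamma}[f](x) - \tau(x)f(x) + f(x) = C_{\Gamma}[f](x) + (1-\tau(x))f(x)$, and similarly $C_{\Gamma}^{-}[f](x) = G(x) + 0 = C_{\Gamma}[f](x) - \tau(x)f(x)$.

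The core analytic work is the continuity of $G$ up to the boundary along non-tangential approach, and here I would run the standard splitting argument. Fix $x\in\Gamma$ and let $x'\to x$ non-tangentially with $|x'-x|\le\delta$ as in Lemma \ref{Lin-lemma}; set $\rho = |x'-x|$. Split $\Gamma = \Gamma_{\rho} \cup (\Gamma\setminus\Gamma_{\rho})$ where $\Gamma_{\rho} = \Gamma\cap B(x,c\rho)$ for a suitable fixed constant $c$. On the far part $\Gamma\setminus\Gamma_{\rho}$ the integrand $K(x',y)\ast(f(y)-f(x))$ converges uniformly to $K(x,y)\ast(f(y)-f(x))$ as $x'\to x$ (the kernels are smooth and bounded away from the diagonal, using Propositions \ref{norm} and \ref{absolute-boundary} to control the $\mathbb{A}$-valued products), so that part causes no trouble. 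On the near part $\Gamma_{\rho}$ one bounds, using $|f(y)-f(x)|\le C|y-x|^{\alpha}$, Proposition \ref{norm}, and the Lin inequalities $|y-x|\le (N+1)|x'-y|$ and the comparable lower bound, an integral of the shape
$$\int_{\Gamma_{\rho}} \frac{|y-x|^{\alpha}}{|x'-y|^{(m+1)n-1}}\,dS(y) \;\lesssim\; \int_{\Gamma_{\rho}} |x'-y|^{\alpha-(m+1)n+1}\,dS(y),$$
and since $\Gamma$ is an $((m+1)n-1)$-dimensional piecewise smooth surface this integrates over $\Gamma_{\rho}$ to something of order $\rho^{\alpha}\to 0$; the same bound with $x'$ replaced by $x$ controls $\int_{\Gamma_{\rho}}K(x,y)\ast(f(y)-f(x))$. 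Combining, $|G(x') - G(x)| \to 0$, which is the desired non-tangential continuity.

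The main obstacle, and the place requiring genuine care rather than a copy of the complex-variable argument, is the \emph{non-associativity of $\mathbb A$}: the kernel $K_j(y-x)$ lies in $M$ but the values $f(y)$, $f(x)$, and the normal $\nu_j(y)$ live in $\mathbb A$, so one cannot freely regroup products such as $K_j(y-x)\bigl(\nu_j(y)(f(y)-f(x))\bigr)$, and in particular the trick used in Lemma \ref{Bochner-Martinel-constant} of collapsing $\tfrac{(y_j-x_j)^c}{|y-x|^{(m+1)n}}\cdot\tfrac{y_j-x_j}{|y-x|}$ into a real scalar relies on Proposition \ref{artin} (Artin's theorem) and must be invoked at exactly the right spots. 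I would therefore keep all products parenthesized in the order $K_j\ast(\cdots) = K_j(y-x)(\nu_j(y)\,\cdot\,)$ throughout, use Proposition \ref{norm} to get the scalar estimates $|K_j(y-x)(\nu_j(y)g)|\le C\,|y-x|^{1-(m+1)n}|g|$ without ever expanding the product, and invoke Corollary \ref{Bochner-Martinelli-coro} and Lemma \ref{Bochner-Martinel-constant} as black boxes for the constant-function pieces so that no associativity is silently used. A secondary technical point is verifying that the solid angle $\tau(x)$ that appears from the sphere piece in Lemma \ref{Bochner-Martinel-constant} is the same $\tau(x)$ that governs the jump here; this is automatic once the constant-function computation is isolated, since it is literally the content of Lemma \ref{Bochner-Martinel-constant}. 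Everything else is the routine $\Gamma_{\rho}$-versus-far-part estimate above.
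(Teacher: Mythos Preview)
Your proposal is correct and follows essentially the same route as the paper: subtract the constant $f(x)$, handle the constant piece with Corollary \ref{Bochner-Martinelli-coro} and Lemma \ref{Bochner-Martinel-constant}, and then prove n.t.\ continuity of $\Psi(x')=\int_\Gamma K(x',y)\ast(f(y)-f(x))$ via a near/far splitting of $\Gamma$ together with Lemma \ref{Lin-lemma}. The one place to tighten is your far-part argument: ``uniform convergence of the integrand'' is not quite right because your far set $\Gamma\setminus\Gamma_\rho$ depends on $\rho=|x'-x|$ and grows toward the singularity as $\rho\to0$; the paper instead estimates the kernel difference directly, obtaining $|K(x',y)-K(x,y)|\lesssim |x-x'|\,|y-x|^{-(m+1)n}$ on both pieces (via Lin's inequalities on the near part and $|y-x|\le 2|y-x'|$ on the far part), and then integrates against $|f(y)-f(x)|\le C|y-x|^{\alpha}$ to get $|I_1|,|I_2|\lesssim\delta^{\alpha}$.
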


 \begin{proof}
Let $x$ be a fixed point on $\Gamma$. By definition,
we have
$$ C_{\Gamma}[f](x')=\int_{\Gamma} K(x',y) \ast f(y)
 =\Psi(x')+\int_{\Gamma}K(x',y) \ast f(x), \quad x'\in M^{n},$$
where $$\Psi(x')=\int_{\Gamma}K(x',y) \ast (f(y)-f(x)).$$
By Lemma \ref{Bochner-Martinel-constant}, it holds that
\begin{eqnarray*}
 \Psi(x) = \int_{\Gamma}K(x,y) \ast f(y)-\int_{\Gamma}K(x,y) \ast f(x)
= C_{\Gamma}[f](x)-\tau(x)f(x).
\end{eqnarray*}
By Corollary \ref{Bochner-Martinelli-coro}, it holds that
\begin{eqnarray*}
 \int_{\Gamma}K(x',y) \ast  f(x) =
\left\{
\begin{array}{ll} f(x), \quad    x' \in \Omega,
\\
0, \quad \quad   x' \in \Omega^{-}.
\end{array}
\right.
\end{eqnarray*}
From these two facts above,  the assertion would be proved if we can show
$$\mathrm{n.t.}\lim_{ \Omega \ni x' \rightarrow x} \Psi(x') =\Psi(x), \ \mathrm{n.t.}\lim_{ \Omega^{-} \ni x' \rightarrow x} \Psi(x') =\Psi(x).$$

To this end,  let   $x'  \in B(x,\delta)$, where  $\delta>0$ is small enough. Consider the ball $B(x,2\delta)$ and set $\Gamma_{\delta}=\Gamma\cap B(x, 2\delta)$. Now  the integral  $\Psi(x') -\Psi(x)$ can be decomposed into two parts  over $\Gamma_{\delta}$ and  $\Gamma\setminus \Gamma_{\delta}$, i.e.
\begin{eqnarray*}
\Psi(x') -\Psi(x)= \int_{\Gamma}(K(x',y)-K(x,y) ) \ast (f(y)-f(x))=I_1+I_2,
\end{eqnarray*}
where
\begin{eqnarray*}
I_1=\int_{\Gamma_{\delta}}(K(x',y)-K(x,y) ) \ast (f(y)-f(x)),
\\
I_2=
\int_{\Gamma\setminus \Gamma_{\delta}} (K(x',y)-K(x,y) ) \ast (f(y)-f(x)).
\end{eqnarray*}

Firstly, let us   estimate the integral   $I_1$.
By Lemma \ref{Lin-lemma},  there exists $N\geq2$, which is independent of $x$, such that, for  $|x'-x|\leq \delta$ and all $y\in \Gamma$,
\begin{eqnarray}\label{Lin-1}
 |x'-x|\leq N|x'-y|,
 \end{eqnarray}
and  \begin{eqnarray}\label{Lin-2}
  |y-x|\leq (N+1)|x'-y|.
 \end{eqnarray}
Direct computations give
  \begin{eqnarray*}
   &&
   \Big|\frac{(y_j-  x_j')^{c} }{|y-x' |^{(m+1)n}}- \frac{( y_j-  x_j)^{c}}{|y- x'|^{(m+1)n}}\Big| \notag
   \\
 &\leq &\frac{| x_j -x_j'|}{|y-x' |^{(m+1)n}}+|y_j-x_j|
  \Big|\frac{1}{|y-x' |^{n(m+1)}}-\frac{1}{|y-x|^{n(m+1)}}\Big| \notag
  \\
  &\leq&
  \frac{| x_j - x'_j|}{|y-x' |^{(m+1)n}}
  +|y_j -x_j||x -x'|\frac{\sum\limits_{s=0}^{(m+1)n-1}|y-x|^{(m+1)n-1-s}|y-x' |^{s}
  }{|y-x |^{(m+1)n}|y-x'|^{(m+1)n}} \notag
  \\
    &\leq&
 | x - x'| \Big( \frac{1}{|y-x' |^{(m+1)n}}
  +  \frac{\sum\limits_{s=0}^{(m+1)n-1}|y-x|^{(m+1)n-1-s}|y-x' |^{s}
  }{|y-x |^{(m+1)n-1}|y-x'|^{(m+1)n}}\Big) \notag
  \\
      &=&
 | x - x'|  \frac{2|y-x |^{(m+1)n-1} +\sum\limits_{s=1}^{(m+1)n-1}|y-x|^{(m+1)n-1-s}|y-x' |^{s}
  }{|y-x' |^{(m+1)n}|y-x |^{(m+1)n-1}}.
  \end{eqnarray*}
  Hereafter, we  write $A \lesssim   B$  for $A \leq C B$.
From (\ref{Lin-2}), we have
 \begin{eqnarray}\label{BM-two-point}
  2|y-x |^{(m+1)n-1} +\sum\limits_{s=1}^{(m+1)n-1}|y-x|^{(m+1)n-1-s}|y-x' |^{s} \lesssim |y-x'|^{(m+1)n-1}, \end{eqnarray}
which means
 $$| K(x',y)-K(x,y)| \lesssim  \frac{| x - x'|}{|y-x'|} \frac{1}{|y-x|^{(m+1)n-1}} \lesssim  \frac{1}{|y-x|^{(m+1)n-1}},$$
 where the second inequality follows from (\ref{Lin-1}).
Hence, we have  by Propositions \ref{absolute-boundary} and \ref{norm}
 $$|I_1| \lesssim \int_{\Gamma_{\delta}} |y-x|^{\alpha+1-(m+1)n} dS(y) \lesssim \delta^\alpha.$$

 Secondly, we  estimate the integral   $I_2$.
Observe that, for $x'\in B(x,\delta)$ and $y \notin B(x,2\delta)$, there  hold that
$$|x'-x|\leq \frac{1}{2} | y-x|, \ |x'-x|\leq  |y-x'|,$$
 which gives
$$  |y-x|\leq |y-x'|+ |x'-x|\leq 2|y-x'|,$$
i.e.
 $$    |y-x|\leq 2|x'-y|, \quad \forall \ y\in \Gamma\setminus \Gamma_{\delta}.$$
Hence,  (\ref{BM-two-point}) holds also for $y\in \Gamma\setminus \Gamma_{\delta}$, and then
 $$| K(x',y)-K(x,y)| \lesssim  \frac{| x - x'|}{|y-x'|} \frac{1}{|y-x|^{(m+1)n-1}}  \leq   \frac{| x - x'|}{2|y-x|^{(m+1)n}}.$$
Now we can obtain, by Propositions \ref{absolute-boundary} and \ref{norm} again,
$$|I_2|  \lesssim   \delta \int_{\Gamma\setminus \Gamma_{\delta}}  |y-x |^{\alpha -(m+1)n }   dS(y)  \lesssim   \delta \delta^{\alpha-1}= \delta^{\alpha }.  $$
The proof is complete.
\end{proof}

 In particular, by Theorem \ref{Plemelj-Sokhotski-theorem}, we trivially obtain the jump formula.
 \begin{corollary}
  Let  $\Omega$ be a bounded domain in $M^{n}$ with piecewise smooth boundary $\Gamma$ and let $f\in \Lambda^{\alpha}(\Gamma, \mathbb{A})$, where $0<\alpha <1$. Then
$$ C_{\Gamma}^{+}[f](x) -C_{\Gamma}^{-}[f](x)=f(x),\quad  x\in \Gamma.$$
 \end{corollary}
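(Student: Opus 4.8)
The plan is to read off the jump formula directly from the two boundary-value identities supplied by Theorem \ref{Plemelj-Sokhotski-theorem}. First I would recall that, for every $x\in\Gamma$, the non-tangential limits of the Bochner--Martinelli integral from inside and outside $\Omega$ exist and satisfy
$$ C_{\Gamma}^{+}[f](x)=C_{\Gamma}[f](x) + (1-\tau(x))f(x), \qquad C_{\Gamma}^{-}[f](x)=C_{\Gamma}[f](x) -\tau(x)f(x), $$
where $C_{\Gamma}[f](x)$ is the singular Bochner--Martinelli integral defined as a Cauchy principal value and $\tau(x)\in(0,1)$ is the solid angle of the tangent cone to $\Gamma$ at $x$.

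Next I would subtract the second identity from the first. The principal-value term $C_{\Gamma}[f](x)$ cancels, leaving
$$ C_{\Gamma}^{+}[f](x)-C_{\Gamma}^{-}[f](x)=\bigl(1-\tau(x)\bigr)f(x)-\bigl(-\tau(x)f(x)\bigr)=\bigl(1-\tau(x)+\tau(x)\bigr)f(x)=f(x), $$
which is exactly the claimed jump relation, valid for all $x\in\Gamma$. Note that the cancellation works regardless of the value of $\tau(x)$, so the piecewise-smooth hypothesis on $\Gamma$ enters only through the existence and well-definedness of $\tau$ and of the principal-value integral, both of which are already guaranteed by the hypotheses of Theorem \ref{Plemelj-Sokhotski-theorem}.

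There is essentially no obstacle here: the corollary is a one-line consequence of the Plemelj--Sokhotski formula, and the only point worth stressing is that the two-sided non-tangential limits must be taken in the sense made precise in Theorem \ref{Plemelj-Sokhotski-theorem} (the same subtraction would fail if one tried to interpret $C_{\Gamma}^{\pm}$ as ordinary limits, since the underlying integral is only conditionally convergent on $\Gamma$). I would therefore present the proof as a short deduction, citing Theorem \ref{Plemelj-Sokhotski-theorem} for the two formulas and performing the subtraction above.
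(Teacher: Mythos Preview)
Your proposal is correct and matches the paper's approach exactly: the paper states that the jump formula is obtained ``trivially'' from Theorem~\ref{Plemelj-Sokhotski-theorem}, and your subtraction of the two Plemelj--Sokhotski identities is precisely that trivial deduction.
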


\section{Hartogs theorem}
Over real alternative $\ast$-algebras,  we can deduce a Hartogs  extension  theorem for monogenic functions of several hypercomplex variables.
To this end,  we  first prove  the existence of the compactly supported solution to the  inhomogeneous Cauchy-Riemann equations, which depends on   an  auxiliary result in \cite[Theorem 3.8]{HRX} as follows.
\begin{theorem}\label{Cauchy-Pompeiu-inverse}
Let    $\Omega$ be a bounded domain in $M$ with  smooth boundary  and    $f\in C^1( \overline{\Omega}, \mathbb{A})$. Then the Teodorescu transform  given by
 $$T[f](x)=-\int_{  \Omega}  E(y-x) f(y) dV(y), \quad  x \in \Omega,   $$
is a right inverse   of the Cauchy-Riemann operator $\overline{\partial}$, i.e.
$$\overline{\partial} T[ f] (x) =f(x),\quad x\in \Omega.$$
\end{theorem}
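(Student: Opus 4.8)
\medskip
\noindent\textbf{Proof strategy.}
The plan is to localize the identity $\overline{\partial}\,T[f]=f$ and reduce it to the fact that the Cauchy kernel $E$ of Example~\ref{Cauchy-kernel-example} is a fundamental solution of $\overline{\partial}$, and then to establish the latter by an excision argument on small spheres. First I would fix $x\in\Omega$, choose $\rho>0$ with $\overline{B(x,\rho)}\subset\Omega$, and pick $\chi\in C^{\infty}_{c}(\Omega)$ with $\chi\equiv1$ on $B(x,\rho)$. Writing $g:=\chi f$ (a compactly supported element of $C^{1}(M,\mathbb{A})$) and $T_{M}[g](x'):=-\int_{M}E(y-x')\,g(y)\,dV(y)$, one has
$$T[f]-T_{M}[g]=-\int_{\Omega\setminus B(x,\rho)}\bigl(1-\chi(y)\bigr)E(y-x')f(y)\,dV(y),$$
whose integrand is $C^{\infty}$ in $x'$ for $x'\in B(x,\rho/2)$. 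Since $E$ is left monogenic on $M\setminus\{0\}$, and since its components $E_{0},\dots,E_{m}$ are, up to sign, the partial derivatives of the scalar fundamental solution $N$ of the Laplacian on $\mathbb{R}^{m+1}$ (so in particular $\partial_{s}E_{t}=\partial_{t}E_{s}$ for $1\le s,t\le m$), one checks that for $y\neq x'$ and any $h\in\mathbb{A}$,
$$\overline{\partial}_{x'}\bigl(E(y-x')h\bigr)=\bigl(\overline{\partial}_{x'}E(y-x')\bigr)h-\sum_{s=0}^{m}\bigl[v_{s},\partial_{x'_{s}}E(y-x'),h\bigr]=0,$$
the first term vanishing by monogenicity and the associator sum by Lemma~\ref{Cauchy-formula-lemma} together with Proposition~\ref{real}. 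Differentiating under the integral, $\overline{\partial}_{x'}\bigl(T[f]-T_{M}[g]\bigr)\equiv0$ on $B(x,\rho/2)$; hence $T[f]\in C^{1}(\Omega)$ and $\overline{\partial}\,T[f](x)=\overline{\partial}\,T_{M}[g](x)$. Since $x$ is arbitrary, it remains to prove $\overline{\partial}\,T_{M}[g]=g$ for compactly supported $g\in C^{1}(M,\mathbb{A})$, which at the point $x$ then yields $\overline{\partial}\,T[f](x)=g(x)=f(x)$.

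For this model identity I would substitute $y=x'+z$, so that $T_{M}[g](x')=-\int_{M}E(z)\,g(x'+z)\,dV(z)$ and hence $\partial_{x'_{k}}T_{M}[g](x')=-\int_{M}E(z)(\partial_{k}g)(x'+z)\,dV(z)$ (legitimate since $E$ is locally integrable and $g\in C^{1}_{c}$). Then I would write $(\partial_{k}g)(x'+z)=\partial_{z_{k}}\bigl(g(x'+z)\bigr)$, integrate by parts over $M\setminus B(0,\epsilon)$ (with no boundary contribution at infinity), multiply by $v_{k}$ and sum over $k$: the resulting bulk term is $\int_{M\setminus B(0,\epsilon)}\sum_{k}v_{k}\bigl[(\partial_{k}E)(z)\,g(x'+z)\bigr]\,dV(z)=0$ for each $\epsilon$, by the same monogenicity-plus-associator cancellation, while the spherical term, after the scaling $z=\epsilon\omega$ and the identity $E(\epsilon\omega)=\omega^{c}/(\sigma_{m+1}\epsilon^{m})$, reduces to $\sigma_{m+1}^{-1}\int_{\mathbb{S}^{m}}\bigl(\sum_{k}\omega_{k}\,v_{k}(\omega^{c}g(x'+\epsilon\omega))\bigr)\,dS(\omega)$. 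The decisive simplification is that for every unit vector $\omega\in\mathbb{S}^{m}\subset M$ and every $a\in\mathbb{A}$,
$$\sum_{k}\omega_{k}\,v_{k}(\omega^{c}a)=\Bigl(\sum_{k}\omega_{k}v_{k}\Bigr)(\omega^{c}a)=\omega(\omega^{c}a)=(\omega\omega^{c})a=n(\omega)\,a=a,$$
by bilinearity of the product, Proposition~\ref{artin} applied with $\omega^{c}\in M$, and $n(\omega)=|\omega|^{2}=1$. Hence the spherical term equals $\sigma_{m+1}^{-1}\int_{\mathbb{S}^{m}}g(x'+\epsilon\omega)\,dS(\omega)\to g(x')$ as $\epsilon\to0$, which gives $\overline{\partial}\,T_{M}[g]=g$.

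The main obstacle is the non-associativity of $\mathbb{A}$: products such as $v_{k}(E\cdot h)$ cannot be freely regrouped, so each time $\overline{\partial}$ is pushed through the kernel $E$ an associator term is produced, whose vanishing relies on Lemma~\ref{Cauchy-formula-lemma}, Propositions~\ref{real} and~\ref{artin}, and the symmetry relations among the components of $E$; likewise the final spherical limit rests on the relation $n(\omega)=|\omega|^{2}$ valid on $M$. (A structurally shorter but less self-contained alternative would combine the Cauchy--Pompeiu formula of Theorem~\ref{Cauchy-Pompeiu} on a small ball with the local solvability of $\overline{\partial}\Phi=g$ coming from a second-order factorization of the Laplacian, replacing the explicit spherical computation by standard interior elliptic estimates.)
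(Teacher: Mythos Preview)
The paper does not supply its own proof of this statement: Theorem~\ref{Cauchy-Pompeiu-inverse} is quoted verbatim as an auxiliary result from \cite[Theorem~3.8]{HRX}, so there is no in-paper argument to compare against. Your strategy is nonetheless a correct and standard one---localize by a cutoff, reduce to a compactly supported datum, and identify the spherical boundary term via the fundamental-solution property of $E$. The two places where non-associativity could obstruct the argument are handled exactly as you say: the associator sum $\sum_{k}[v_{k},\partial_{k}E,\,\cdot\,]$ vanishes because the real components of $E$ satisfy $\partial_{s}E_{t}=\partial_{t}E_{s}$ for $s,t\ge 1$ (so either Lemma~\ref{Cauchy-formula-lemma} or directly the alternating property of the associator applies, together with Proposition~\ref{real} for the $s=0$ term), and on the sphere the identity $\omega(\omega^{c}a)=(\omega\omega^{c})a=|\omega|^{2}a$ follows from Proposition~\ref{artin} and Assumption~3. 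One cosmetic point worth tightening: the associator you display has entries $\partial_{x'_{s}}E(y-x')$, which are $M$-valued, whereas Lemma~\ref{Cauchy-formula-lemma} is phrased with the \emph{scalar} components $\phi_{s}$ and the operator $\overline{\partial}$; the two forms coincide after expanding $\partial_{x'_{s}}E=-\sum_{t}(\partial_{s}E_{t})v_{t}$ and relabelling, but stating this explicitly would make the appeal to the lemma transparent.
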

Let $\Omega$ be a domain in $M^n$. Denote by $C_0^k(\Omega,  \mathbb{A})$ the spaces of functions in $C^k(\Omega,  \mathbb{A})$ that have compact supports in $\Omega$.
\begin{theorem}\label{inhomogeneous-theorem}
Let $g_1, g_2,\ldots,g_n\in C_0^k( M^n,  \mathbb{A})$ be of compact support, where $n>1$ and $ k\geq 3$. Then   the following
three statements are equivalent:\\
(i) The inhomogeneous equation system
\begin{eqnarray}\label{non-homogeneous-equation}
\overline{\partial}_jf=g_j, \quad j=1,\ldots,n,
\end{eqnarray}
admits a compactly supported solution $f\in C_0^{k}(M^n, \mathbb{A})$. Furthermore,   the solution $f$ vanishes in the unbounded component  of
$M^n\setminus (\mathrm{supp}g_1\cup\cdots\cup\mathrm{supp}g_n)$.
\\
(ii) For $j=2,\ldots, n$ and $x=(x_1, x^{0})=(x_1,x_2,\ldots, x_n)\in M^n$,
\begin{eqnarray}\label{integral-condition}
g_j(x)= -\overline{\partial}_j  \Big(\int_{M}  E(y_1)f(y_1+x_1,x^{0})  dy_1 \Big).
\end{eqnarray}
(iii) For $i,j=1,\ldots, n$, there holds
\begin{eqnarray}\label{condition-Laplacde}
\overline{\partial}_i (\partial_j g_j)=\triangle_j g_i, \end{eqnarray}
where   $\Delta_{i}$ is the Laplacian    in  $\mathbb{R}^{m+1}$ with respect to the $i$-th variable.
\end{theorem}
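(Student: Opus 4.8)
The structure is a cycle of implications, with the final "furthermore" clause handled at the end. The plan is to prove (i) $\Rightarrow$ (iii) $\Rightarrow$ (ii) $\Rightarrow$ (i), or some convenient variant; in practice the cleanest route is to show (i) $\Rightarrow$ (iii) by direct differentiation, (iii) $\Rightarrow$ (i) by exhibiting an explicit candidate built from the Teodorescu-type operator in one variable, and then (i) $\Leftrightarrow$ (ii) as a reformulation of what the explicit solution must be. Throughout I would freely use the one-variable Cauchy–Pompeiu machinery (Theorem~\ref{Cauchy-Pompeiu} with $n=1$) and the right-inverse property of $T$ (Theorem~\ref{Cauchy-Pompeiu-inverse}), as well as the associativity crutches Propositions~\ref{real}, \ref{artin} whenever a product $v_s v_t f$ has to be reassociated.

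\textbf{Step 1: (i) $\Rightarrow$ (iii).} Suppose $f\in C_0^k(M^n,\mathbb A)$ solves $\overline\partial_j f = g_j$ for all $j$. Since $k\geq 3$, all the mixed derivatives appearing below are continuous, so partials commute. Apply $\overline\partial_i$ to $\overline\partial_j f = g_j$: the point is that $\overline\partial_i \overline\partial_j = \overline\partial_j \overline\partial_i$ does \emph{not} hold in general (the $v_s$ do not commute across different copies unless... actually they do, since $\overline\partial_i$ and $\overline\partial_j$ involve the same basis $v_0,\dots,v_m$ but differentiate in disjoint sets of real variables — so as \emph{operators} $\overline\partial_i\overline\partial_j$ acting on a scalar component is just $\sum_{s,t} v_s v_t \partial_{i,s}\partial_{j,t}$, which is generally $\neq \overline\partial_j\overline\partial_i$ because $v_sv_t\neq v_tv_s$). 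The correct identity to extract is $\overline\partial_j\overline\partial_j = -\triangle_j$ on scalar components, together with the observation that applying $\overline\partial_i$ to $g_j = \overline\partial_j f$ and $\overline\partial_j$ to $g_i=\overline\partial_i f$ and then using $\overline\partial_i\overline\partial_j f$ with one of the factors "real-valued in the relevant variable" lets one cancel. Concretely, I would compute $\overline\partial_i(\partial_j g_j) = \overline\partial_i \partial_j \overline\partial_j f = \overline\partial_i \overline\partial_j(\partial_j f)$ and $\triangle_j g_i = \triangle_j\overline\partial_i f = -\overline\partial_j\overline\partial_j \overline\partial_i f$; since $\partial_j$ is scalar it commutes past everything, and the associator corrections vanish by Proposition~\ref{artin} because the innermost $\overline\partial_i f$ can be grouped so that the repeated index-$j$ factor $v_s\cdot v_s$ (no sum) multiplies through — this is where the single-copy structure of each $\overline\partial_j$ is essential. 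I expect this bookkeeping to be the \textbf{main obstacle}: in the non-associative setting one must be scrupulous about where brackets go, and the identity \eqref{condition-Laplacde} is exactly the compatibility condition that survives.

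\textbf{Step 2: (iii) $\Rightarrow$ (i), plus the explicit formula and support statement.} Define $f(x) := -\int_M E(y_1)\, g_1(y_1 + x_1, x_2,\dots,x_n)\, dy_1$; equivalently $f = T_1[g_1]$ where $T_1$ is the Teodorescu transform in the first variable (with the other variables as parameters). By Theorem~\ref{Cauchy-Pompeiu-inverse} applied to the first slot (the compact support of $g_1$ lets us integrate over any large ball), $\overline\partial_1 f = g_1$ automatically. For $j\geq 2$, one must show $\overline\partial_j f = g_j$: differentiate under the integral sign, $\overline\partial_j f = -\int_M E(y_1)\,\overline\partial_j g_1(y_1+x_1,x^0)\,dy_1$ (valid since $g_1\in C_0^k$), and now invoke hypothesis \eqref{condition-Laplacde} to rewrite $\overline\partial_j g_1$ in a form that collapses under the Cauchy–Pompeiu identity; the condition \eqref{integral-condition} in statement (ii) is literally the assertion that this equals $g_j$, so (iii) $\Rightarrow$ (ii) $\Rightarrow$ (i) can be run together. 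Since $n>1$, for $x$ in the unbounded component of $M^n\setminus\bigcup_j\operatorname{supp} g_j$ one can push $x_1$ to $\infty$ along a real direction (using $n>1$ so that the slice $\{(\cdot, x^0)\}$ meets the support in a set bounded away from the relevant $y_1$ region, or rather so that $g_1(\cdot,x^0)\equiv 0$ identically for such $x^0$); then $f(x)=0$, and by real-analyticity / harmonicity of $f$ on the monogenic-free region (Remark~\ref{harmonic}, since $f$ is monogenic off the supports) $f$ vanishes on the whole unbounded component. I would close by noting that uniqueness of the compactly supported solution (difference of two solutions is monogenic with compact support, hence harmonic with compact support, hence zero) shows the explicit $f$ is \emph{the} solution, which retroactively justifies reading \eqref{integral-condition} as the necessary-and-sufficient condition (ii).

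\textbf{Remark on ordering.} I would actually present it as (i)$\Rightarrow$(iii), (iii)$\Rightarrow$(ii), (ii)$\Rightarrow$(i): the middle implication (iii)$\Rightarrow$(ii) is the analytic heart — it says the compatibility relations \eqref{condition-Laplacde} force the candidate $T_1[g_1]$ to satisfy \emph{all} the equations, not just the first — and (ii)$\Rightarrow$(i) is then almost tautological once Theorem~\ref{Cauchy-Pompeiu-inverse} is in hand. The support assertion is proved last, after (i) is available, via the vanishing-at-infinity plus harmonic-continuation argument just sketched.
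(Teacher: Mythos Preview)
Your overall architecture --- (i)$\Rightarrow$(iii) by differentiation, build the candidate $f=T_1[g_1]$, verify the remaining equations from the compatibility hypothesis, then deduce the support statement by harmonicity and unique continuation --- matches the paper's. The directions (i)$\Leftrightarrow$(ii), (i)$\Rightarrow$(iii), and the ``furthermore'' clause are fine as sketched.

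The gap is in your (iii)$\Rightarrow$(ii)/(i) step, and it is genuine. Two issues, both coming from non-associativity:

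\begin{itemize}
\item[(a)] The identity $\overline\partial_j f = -\int_M E(y_1)\,(\overline\partial_j g_1)(y_1+x_1,x^0)\,dy_1$ is \emph{false} over a general alternative algebra. Differentiating under the integral gives $-\int_M \sum_s v_s\bigl(E(y_1)\,\partial_{x_{j,s}} g_1\bigr)\,dy_1$, and pushing $v_s$ past $E(y_1)$ costs the associators $[v_s,E(y_1),\partial_{j,s} g_1]$. Proposition~\ref{artin} only kills $[x,x,y]$ and $[x^c,x,y]$; here $v_s$ and $E(y_1)$ are distinct elements of $M$, so nothing forces these terms to vanish.
\item[(b)] Even if (a) were not an obstacle, hypothesis \eqref{condition-Laplacde} gives you no handle on $\overline\partial_j g_1$ itself: it is a \emph{second}-order relation $\overline\partial_i(\partial_j g_j)=\triangle_j g_i$, not the first-order $\overline\partial_j g_1=\overline\partial_1 g_j$ that would let the integral collapse via Cauchy--Pompeiu.
\end{itemize}

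The paper circumvents both by computing $\triangle_i f$ rather than $\overline\partial_j f$. Since $\triangle_i$ is a real scalar operator it commutes through the product with $E(y_1)$ without associators, giving $\triangle_i f = -\int_M E(y_1-x_1)\,(\triangle_i g_1)(y_1,x^0)\,dy_1$. Condition \eqref{condition-Laplacde} with first index $1$ reads $\triangle_i g_1=\overline\partial_1(\partial_i g_i)$, so Theorem~\ref{Cauchy-Pompeiu} (boundary term zero by compact support) yields $\triangle_i f=\partial_i g_i$. Then for every $j$,
\[
\triangle_i(\overline\partial_j f - g_j)=\overline\partial_j(\partial_i g_i)-\triangle_i g_j=0
\]
again by \eqref{condition-Laplacde}, so $\overline\partial_j f - g_j$ is harmonic and compactly supported, hence identically zero. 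This Laplacian detour --- replacing the non-associative $\overline\partial_j$ by the scalar $\triangle_i$ --- is the missing idea in your Step~2.
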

\begin{proof}
In the  proof given below, keep in mind that  all functions  $g_1, g_2,\ldots,g_n\in C_0^k( M^n,  \mathbb{A})$ are  of compact supported in $M^{n}$.

(i)$\Longrightarrow$(ii)
Suppose that the inhomogeneous equation system (\ref{non-homogeneous-equation}) has a solution $f \in C_0^{k}(M^n, \mathbb{A})$.
Then, for $x=(x_1, x^{0})=(x_1,x_2,\ldots, x_n)\in M^n$,
\begin{eqnarray*}
\int_{M}  E(y_1)g_1(y_1+x_1,x^{0})  dy_1
&=& \int_{M}  E(y_1-x_1)g_1(y_1,x^{0})  dy_1
\\
&=&\int_{M}  E(y_1-x_1) (\overline{\partial}_1 f)(y_1,x^{0})f dy_1
\\
&=& -f(x_1, x^{0}),
 \end{eqnarray*}
where the last equality follows from  Theorem \ref{Cauchy-Pompeiu} and $f$ is of compact support in $M^n$.
Hence, it holds that for $j=2,\ldots, n$
$$-\overline{\partial}_j  \Big( \int_{M}  E(y_1)f(y_1+x_1,x^{0})  dy_1\Big)= \overline{\partial}_j f(x)=g_j(x),$$
which gives the conclusion in (ii).

(ii)$\Longrightarrow$(i) For $g_1 \in C_0^k( M^n,  \mathbb{A})$,
set
\begin{eqnarray}\label{function-solusion}
f(x)=- \int_{M}  E(y_1)g_1(y_1+x_1,x^{0})  dy_1=-\int_{M}  E(y_1-x_1)g_1(y_1,x^{0})  dy_1,
\end{eqnarray}
which is well-defined on $M^{n}$ and satisfies that  $f \in C^{k}(M^n, \mathbb{A})$.
Whence, if (ii) holds,  then
 $$\overline{\partial}_j f=g_j, \quad j=2,\ldots, n.$$
As for $j=1$, by Theorem \ref{Cauchy-Pompeiu-inverse}, we get
$$\overline{\partial}_1 f(x)= g_1(x_1,x^{0})=g_1(x). $$
Hence, the function $f$ given by (\ref{function-solusion}) is a solution of the inhomogeneous equation system (\ref{non-homogeneous-equation}),
 which implies that    $\overline{\partial}_j f (j=1,\ldots,n)$ vanish on $\Omega_0=M^n\setminus (\mathrm{supp}g_1\cup\cdots\cup\mathrm{supp}g_n)$. That is to say  $f$ is monogenic  and thus, by Remark \ref{harmonic}, harmonic in $\Omega_0$.
 According to (\ref{function-solusion}),  $f(x)=f(x_1, x^{0})$ vanishes in an   unbounded open set in $M^{n}$ where $|x^{0}|$ is large enough.
Hence, from the identity principle for harmonic functions,  $f$ vanishes  in the (connected) unbounded component  of
$\Omega_0$. Consequently,   $f$  is of compact support.

(i)$\Longrightarrow$(iii) Assume that the system (\ref{non-homogeneous-equation}) has a solution $f$, i.e. $\overline{\partial}_jf=g_j$ for $ j=1,2,\ldots,n$.  From \cite[Proposition 5]{Perotti-22}, we have
 $$ \partial_j (\overline{\partial}_jf)  =  (\partial_j \overline{\partial}_j)f  = \triangle_j f,  $$
which gives that, for $i=1,\ldots, n$,
$$  \overline{\partial}_i (\partial_j g_j) =\overline{\partial}_i (\triangle_j f)= \triangle_j (\overline{\partial}_i f) =\triangle_j g_i,$$
where the second equality is due to that $ \triangle_j$ are   real-scalar-valued operators.

(iii)$\Longrightarrow$(i)
As  shown before,    the function $f\in C_0^{k}(M^n, \mathbb{A})$ given by (\ref{function-solusion}) satisfies
$\overline{\partial}_1 f(x)=g_1(x). $ Now  it remains to show $\overline{\partial}_j f=g_j $ for $j=2,\ldots, n,$ when (iii) holds.
In fact,  noticing that $ \triangle_i$ are real-scalar-valued operators for $i=1, \ldots,n$,  we have
\begin{eqnarray*}
  \triangle_i f(x)&=&- \int_{M}  \triangle_i (E(y_1)  g_1(y_1+x_1,x^{0}))  dy_1
   \\
   &=&- \int_{M}  E(y_1) (\triangle_i g_1) (y_1+x_1,x^{0})  dy_1
   \\
   &=&- \int_{M}  E(y_1-x_1) (\triangle_i g_1) (y_1,x^{0})  dy_1.
   \end{eqnarray*}
Hence, under the assumption (iii) and then by Theorem \ref{Cauchy-Pompeiu},  it holds that
\begin{eqnarray*}
  \triangle_i  f(x)
  =- \int_{M}  E(y_1-x_1) (\overline{\partial}_1 (\partial_i g_i) )(y_1,x^{0})  dy_1= \partial_i g_i (x).
   \end{eqnarray*}
Consequently,  by assumption,
$$ \triangle_i(\overline{\partial}_j f- g_j )= \overline{\partial}_j \triangle_i f -\triangle_i   g_j=\overline{\partial}_j (\partial_i g_i )-\triangle_i   g_j=0, $$
which shows $\overline{\partial}_j f- g_j $  are harmonic in $M^{n}$ for  $j=1, \ldots,n$.  Note that, by construction,  $\overline{\partial}_j f- g_j $ are also compactly supported in $M^{n}$. Hence, the identity theorem for harmonic  functions can conclude that $\overline{\partial}_j f=g_j $ for $j=1, \ldots,n$. The proof is complete.
 \end{proof}
 \begin{remark}\label{C1-C00}{\rm
By Theorem \ref{Cauchy-Pompeiu-inverse}, the condition in (\ref{integral-condition}) has an equivalent form
$$\int_{M}\sum\limits_{s=0}^mv_s\big( E(y_1)(\partial_{x_{1,s}}g_j-\partial_{x_{j,s}}g_1)(y_1+x_1,x^{0})\big) dy_1=0,$$}
which appeared in the case of Clifford algebras \cite{Ren-Wang-14} and octonions \cite{Wang-Ren-14}. \end{remark}

As in the classical complex case,   we  can deduce a Hartogs extension theorem from Theorem \ref{inhomogeneous-theorem} for monogenic functions of   several hypercomplex variables over real alternative $\ast$-algebras, which  subsumes several results for  quaternions  (see \cite[Theorem 6]{Pertici-88} or \cite[Theorem l.1]{Adams-99}), Clifford algebras  (see \cite[Theorem 4.2]{Adams-99},  \cite[Theorem 4.3]{Ren-Wang-14}, and \cite[Proposition 8]{MM-04}), and  octonions \cite[Theorem 5.1]{Wang-Ren-14}.

\begin{theorem}[Hartogs]\label{Hartogs}
Let $\Omega$  be a domain  in $M^n$ with $n>1$. Let $K\subset\Omega$ be a compact set such that $\Omega \setminus K$ is connected.
Then every monogenic function   in $\Omega \setminus K$ can be extended to a monogenic   function on $\Omega$.
\end{theorem}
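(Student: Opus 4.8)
The plan is to run the Ehrenpreis-type cut-off argument: turn the extension problem into an inhomogeneous Cauchy--Riemann system with compactly supported data and invoke Theorem~\ref{inhomogeneous-theorem}. First I would fix $\chi\in C_0^\infty(\Omega,\mathbb{R})$ with $\chi\equiv 1$ on a neighbourhood of $K$ and $\supp\chi$ compact in $\Omega$, and set
\[
F_0:=
\begin{cases}
(1-\chi)f, & \text{on }\Omega\setminus K,\\
0, & \text{on }K.
\end{cases}
\]
Since $f$ is monogenic and hence $C^\infty$ by Remark~\ref{harmonic}, and since $\chi\equiv 1$ near $K$, the function $F_0$ is well defined and lies in $C^\infty(\Omega,\mathbb{A})$, with $F_0\equiv f$ off $\supp\chi$. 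Put $g_j:=\overline{\partial}_j F_0=-(\overline{\partial}_j\chi)f$ for $j=1,\dots,n$; these are smooth, supported in a fixed compact subset of $\Omega\setminus K$, and, after extension by $0$, we regard them as elements of $C_0^\infty(M^n,\mathbb{A})\subseteq C_0^{k}(M^n,\mathbb{A})$.

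Next I would check compatibility condition (iii) of Theorem~\ref{inhomogeneous-theorem} for these $g_j$. On $\Omega$ we have $g_j=\overline{\partial}_j F_0$ with $F_0$ a genuine $\mathbb{A}$-valued function, so Proposition~\ref{artin} gives (exactly as in the proof of Theorem~\ref{inhomogeneous-theorem}) $\partial_j g_j=(\partial_j\overline{\partial}_j)F_0=\triangle_j F_0$, whence $\overline{\partial}_i(\partial_j g_j)=\overline{\partial}_i\triangle_j F_0=\triangle_j(\overline{\partial}_i F_0)=\triangle_j g_i$ because $\triangle_j$ is a real-scalar operator; outside $\bigcup_j\supp g_j$ (in particular on $M^n\setminus\Omega$) every $g_j$ vanishes, so the identity holds on all of $M^n$. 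Since $n>1$, Theorem~\ref{inhomogeneous-theorem} then produces a compactly supported $u\in C_0^\infty(M^n,\mathbb{A})$ with $\overline{\partial}_j u=g_j$ for all $j$, given explicitly (in the notation $x=(x_1,x^{0})$) by $u(x)=-\int_{M}E(y_1-x_1)\,g_1(y_1,x^{0})\,dy_1$ with $E$ the Cauchy kernel. Setting $F:=F_0-u$ on $\Omega$ gives $\overline{\partial}_jF=g_j-g_j=0$, so $F\in\mathcal{M}(\Omega,\mathbb{A})$.

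It remains to prove $F=f$ on $\Omega\setminus K$, and this is where $n>1$ and the connectedness of $\Omega\setminus K$ enter. On the open set $\Omega\setminus\supp\chi$ (which lies in $\Omega\setminus K$ since $K\subseteq\supp\chi$) one has $F-f=-u$. From the explicit formula: if $x^{0}=(x_2,\dots,x_n)$ lies outside the projection $\pi(\supp g_1)$ of $\supp g_1$ onto the last $n-1$ hypercomplex variables, then $g_1(\,\cdot\,,x^{0})\equiv 0$ on $M$ and hence $u(x_1,x^{0})=0$. Now $\pi(\supp g_1)\subseteq\pi(\supp\chi)$ is compact, while $\pi(\Omega)$ is a non-empty open --- hence non-compact --- subset of the connected Euclidean space $M^{n-1}$, so $\pi(\supp\chi)\subsetneq\pi(\Omega)$; picking $b\in\pi(\Omega)\setminus\pi(\supp\chi)$, a point $(a,b)\in\Omega$, and a connected open neighbourhood $N\subseteq\Omega$ of $(a,b)$ with $\pi(N)$ disjoint from $\pi(\supp\chi)$, we get $\chi\equiv 0$ and $u\equiv 0$ on $N$, hence $F=F_0=f$ on $N\subseteq\Omega\setminus K$. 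Finally $F-f$ is monogenic --- thus harmonic and real-analytic --- on the connected set $\Omega\setminus K$ and vanishes on the non-empty open set $N$, so the identity theorem forces $F\equiv f$ on $\Omega\setminus K$; therefore $F$ is the desired monogenic extension of $f$ to $\Omega$.

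I expect the last paragraph to be the only genuinely delicate point: the construction of $F_0$, the verification of (iii), and the appeal to Theorem~\ref{inhomogeneous-theorem} are formal, and the real content is the bookkeeping that locates a point of $\Omega\setminus K$ at which both the cut-off $\chi$ and the corrector $u$ vanish --- equivalently, that the region where $F_0=f$ meets the unbounded component of $M^n\setminus\bigcup_j\supp g_j$ --- which is exactly the step exploiting $n>1$ and the connectedness of $\Omega\setminus K$.
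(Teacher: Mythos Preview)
Your argument is correct and follows essentially the same Ehrenpreis cut-off strategy as the paper: construct $F_0=(1-\chi)f$, verify the compatibility condition (iii) for $g_j=\overline{\partial}_jF_0$, solve $\overline{\partial}_ju=g_j$ via Theorem~\ref{inhomogeneous-theorem}, and conclude by the identity principle on the connected set $\Omega\setminus K$. The only cosmetic difference is that in the final step the paper invokes directly the ``vanishes on the unbounded component'' clause of Theorem~\ref{inhomogeneous-theorem}, whereas you re-derive a concrete vanishing region for $u$ from the explicit integral formula; both routes produce a non-empty open subset of $\Omega\setminus K$ on which $F=f$.
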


\begin{proof}
Let $f$ be a monogenic function in $\Omega\setminus K$. As in the classical complex case, pick  a function  $\varphi\in C_{0}^{\infty}(\Omega,\mathbb R)$ such that $\varphi\equiv1$ in a neighborhood   of $K$ and set $f_0=(1-\varphi)f$ in  $\Omega$.
We get readily  $f_0\in C^{\infty}(\Omega,\mathbb{A})\cap \mathcal {M}(\Omega\setminus\mathrm{supp}{\varphi}, \mathbb{A}).$
 For $j=1,2,\ldots,n$, consider
 \begin{eqnarray*}\label{eq:hp-U}
 h_j(x)=
 \left\{
 \begin{array}{lll}
 \overline{\partial}_j f_0(x), \quad x\in \ \Omega,
 \\
 0,\qquad\,\,\, x\in   M^n\setminus \Omega,
 \end{array}
 \right.
 \end{eqnarray*}
with $ h_j\in C_0^{\infty}(M^n, \mathbb{A})$  and $\mathrm{supp}{h_j}\subset\mathrm{supp}{\varphi}$.

By construction and   \cite[Proposition 5]{Perotti-22},  we have   in $\Omega$
$$  \overline{\partial}_i (\partial_j h_j) =\overline{\partial}_i ({\partial}_j(\overline{\partial}_j f_0))
 =\overline{\partial}_i (({\partial}_j\overline{\partial}_j ) f_0 )
=\overline{\partial}_i (\triangle_j f_0)= \triangle_j(\overline{\partial}_i f_0) =\triangle_j h_i,$$
i.e.
$$  \overline{\partial}_i (\partial_j h_j)=\triangle_j h_i,$$
which holds naturally outside $\Omega$.
Hence,   functions $h_j$ $(j=1,2,\ldots,n)$ satisfy the compatibility condition  (\ref{condition-Laplacde}).
  By Theorem \ref{inhomogeneous-theorem}, the inhomogeneous equation system
     $$ \overline{\partial}_j g=h_j,\quad j=1,2,\ldots,n,$$
has  a solution $g\in C_0^{\infty}(M^n, \mathbb{A})$, which vanishes on the unbounded  component of
    $M^n\setminus (\mathrm{supp}h_1\cup\cdots\cup\mathrm{supp}h_n)$.
Recalling that
    $$\mathrm{supp}h_j \subset \mathrm{supp}\varphi,\quad    j=1,\ldots,n,$$
particularly $g$ vanishes on the unbounded  component $\Omega_0$ of  $M^n\setminus \mathrm{supp}{\varphi} \subset  M^n\setminus (\mathrm{supp}h_1\cup\cdots\cup\mathrm{supp}h_n)$.

Set $\tilde{f}=f_0-g.$ We get  $\tilde{f}\in \mathcal {M}(\Omega, \mathbb{A})$, due to  that
   $$\overline{\partial}_j \tilde{f} = \overline{\partial}_j f_0-\overline{\partial}_j g=h_j-h_j=0.$$

In summary, we have
 $$ \left. g\right|_{\Omega_0}=0, \
                 \left.(f-f_0)\right|_{\Omega\setminus \mathrm{supp}{\varphi}}=0, \ \Omega_0\subset M^n\setminus \mathrm{supp}{\varphi},$$
which lead to  that  $\tilde{f}\equiv f$ on $\Omega_0\cap \Omega$. In view of that $ \Omega_0\cap \Omega\subset \Omega \setminus \mathrm{supp}{\varphi} \subset \Omega\setminus K$
        and    $\Omega\setminus K$ is connected, it follows from the identity principle  for harmonic functions that $\tilde{f}\equiv f$ on $\Omega\setminus K$, which  concludes that  $\tilde{f}$  is  the monogenic  extension of $f$ to $\Omega$, as desired.
\end{proof}
\par
\noindent
\textbf{Acknowledgements}
 The authors are grateful to the referee for careful reading of
the paper and valuable suggestions and comments, which improve the quality of this paper significantly.\\
\noindent
\textbf{Conflict of interest}
There is no financial or non-financial interests that are directly or indirectly related to the work submitted for publication.
\\
\textbf{Data availability}
Data sharing is not applicable to this article as no datasets were generated  during the current study.\\


\vskip 10mm
\end{document}